
\documentclass[11pt]{amsart}
\usepackage {enumerate}
\usepackage{setspace}
\usepackage{caption}
\usepackage{mathrsfs}
\usepackage{hyperref}
\usepackage{esint}
\usepackage{amssymb}

\usepackage{color}
\usepackage{lipsum}
\newcommand\blfootnote[1]{%
  \begingroup
  \renewcommand\thefootnote{}\footnote{#1}%
  \addtocounter{footnote}{-1}%
  \endgroup
}
\newtheorem{theorem}{Theorem}[section]
\newtheorem{lemma}[theorem]{Lemma}

\newtheorem{proposition}[theorem]{Proposition}

\numberwithin{equation}{section}

\theoremstyle {definition}
\newtheorem{definition}[theorem]{Definition}
\newtheorem{remark}[theorem]{Remark}

\DeclareMathOperator{\area}{area}

\DeclareMathOperator{\vol}{vol}
\DeclareMathOperator{\Ric}{Ric}

\DeclareMathOperator{\tr}{tr}
\DeclareMathOperator{\Div}{div}

\DeclareMathOperator{\lip}{Lip}

\DeclareMathOperator{\sys}{sys}
\DeclareMathOperator{\id}{id}

\begin{document}
\title[Rigidity results for complete manifolds]{Rigidity results for complete manifolds with nonnegative scalar curvature}
\author{Jintian Zhu}
\address{Key Laboratory of Pure and Applied Mathematics, School of Mathematical Sciences, Peking University, Beijing, 100871, P.~R.~China}
\email{zhujt@pku.edu.cn, jintian@uchicago.edu}
\maketitle

\begin{abstract}
In this paper, we are going to show some rigidity results for complete open Riemannian manifolds with nonnegative scalar curvature. Without using the famous Cheeger-Gromoll splitting theorem we give a new proof to a rigidity result for complete manifolds with nonnegative scalar curvature admitting a proper smooth map to $T^{n-1}\times \mathbf R$ with nonzero degree. Here we introduce a trick to obtain the compactness of limit hypersurface from locally graphical convergence. Based on the same idea we also establish an optimal $2$-systole inequality for several classes of complete Riemannian manifolds with positive scalar curvature and further prove a rigidity result for the equality case.
\end{abstract}
\blfootnote{2020 {\it Mathematics Subject Classification}. Primary 53C24; Secondary 53C21}

\section{Introduction}
During the past a few decades, many rigidity results for closed Riemannian manifolds with nonnegative scalar curvature have been established. It is well-known that every smooth metric on $T^3$ with nonnegative scalar curvature is flat, which was proved by Schoen and Yau in \cite{SY1979(2)} using the minimal surface method. Later, they generalized the same result to $T^n$ in \cite{SY1979} with a dimension reduction argument when the dimension $n$ is between $3$ and $7$. In their work \cite{GL1980}, Gromov and Lawson established the same result for $T^n$ in all dimensions with a totally different method based on Dirac operator.

For closed Riemannian manifolds with positive scalar curvature, there is also an interesting rigidity phenomenon involving both the scalar curvature and the $2$-systole. For a complete Riemannian manifold $(M,g)$ with nontrivial second homotopy group, Bray, Brendle and Neves introduced the following {\it spherical $2$-systole}
\begin{equation*}
 \sys_2( M, g):=\inf\left\{\area(\mathbf S^2,i^* g)\left|\begin{array}{c}
 \text{$i:\mathbf S^2\to M$ smooth such that}\\
 \text{$[i]$ homotopically nontrivial}
 \end{array}\right.\right\}.
\end{equation*}
With this notion they proved in \cite{BBN2010} that the spherical $2$-systole of every closed Riemannian $3$-manifold with nontrivial second homotopy group is no greater than $4\pi$ if its scalar curvature is no less than $2$. Furthermore, the equality holds if and only if the universal covering splits into the standard Riemannian product $\mathbf S^2\times \mathbf R$. Generally, this estimate cannot be true for higher dimensions only with the assumption on positivity of the scalar curvature. After imposing some further topological restriction, the author made a generalization in \cite{Z2020} for closed Riemannian manifolds with dimension between $3$ and $7$. In fact, he proved that for every closed Riemannian $n$-manifold with scalar curvature no less than $2$ admitting a nonzero degree smooth map to $\mathbf S^2\times T^{n-2}$, the spherical $2$-systole is no greater than $4\pi$. Similarly, the equality holds if and only if the universal covering splits into the standard Riemannian product $\mathbf S^2\times \mathbf R^{n-2}$.

Apart from these results for closed manifolds, there are various discussions for complete open Riemannian manifolds as well. For instance, Schoen and Yau proved the following result in \cite{SY1982}: if the fundamental group of an open $3$-manifold has a subgroup isometric to that of a closed surface with positive genus, then it cannot admit any complete metric with positive scalar curvature. Based on Dirac operator method, Gromov and Lawson made a relatively systematic discussion on the non-existence of complete metrics with positive scalar curvature for several classes of manifolds in \cite{GL1983}. For a complete metric with nonnegative scalar curvature but non-flat Ricci curvature, it is a standard trick to make a deformation for this metric along the Ricci tensor and then conformal it to a new one with positive scalar curvature. From this point of view, for those manifolds in \cite{SY1982} and \cite{GL1983} every complete metric with nonnegative scalar curvature must be Ricci flat. In most cases, the famous Cheeger-Gromoll splitting theorem in \cite{CG71} can be then applied to obtain certain rigidity results. In particular, we know that every complete metric on $T^{n-1}\times \mathbf R$ with nonnegative scalar curvature is flat from the work in \cite{SY1982} and \cite{GL1983}. However, very few knowledge is known for the spherical $2$-systole of complete Riemannian manifolds with positive scalar curvature compared to the closed case.

In this article, we will continue a discussion on rigidity results for complete Riemannian manifolds with nonnegative scalar curvature. Our first purpose is to give an alternative proof for the following result without the use of the Cheeger-Gromoll splitting theorem.
\begin{theorem}\label{Thm: main1}
For $3\leq n\leq 7$, let $(M^n,g)$ be an orientable complete open Riemannian manifold with nonnegative scalar curvature, which admits a smooth proper map $f:M\to T^{n-1}\times \mathbf R$ with nonzero degree. Then $(M,g)$ is isometric to the Riemannian product of a flat $(n-1)$-torus and the real line.
\end{theorem}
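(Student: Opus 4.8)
The plan is to avoid the Cheeger--Gromoll splitting theorem by producing a single \emph{compact} area-minimizing hypersurface that carries the torus topology, and then upgrading its infinitesimal rigidity to a global product structure. Write $f=(f_0,h)$ with $h=\pi_{\mathbf R}\circ f\colon M\to\mathbf R$. Since $T^{n-1}$ is compact, $h$ is proper; fix a regular value, say $0$, so that $\Sigma_0:=h^{-1}(0)$ is a closed hypersurface, and a standard slicing argument shows that $f|_{\Sigma_0}\colon\Sigma_0\to T^{n-1}\times\{0\}=T^{n-1}$ again has nonzero degree. In particular $0\neq f_*[\Sigma_0]\in H_{n-1}(T^{n-1};\mathbf Z)$, so $[\Sigma_0]\neq 0$ in $H_{n-1}(M;\mathbf Z)$. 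The first goal is to minimize area in the class $[\Sigma_0]$ and obtain a \emph{compact} minimizer.

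For each large regular value $L$ of $h$ set $\bar N_L:=h^{-1}([-L,L])$, a compact manifold with boundary, and let $\Sigma_L$ minimize area in the class $[\Sigma_0]$ among cycles supported in $\bar N_L$ (realised as the reduced boundary of a constrained area-minimizing Caccioppoli set, hence two-sided and, since $2\le n-1\le 6$, smooth and embedded in the interior). As $\Sigma_0$ is an admissible competitor, $\area(\Sigma_L)\le\area(\Sigma_0)$ for all $L$. Interior curvature estimates for minimizers together with this uniform area bound give, after passing to a subsequence and diagonalizing over an exhaustion of $M$, smooth locally graphical convergence $\Sigma_L\to\Sigma_\infty$ to a complete, two-sided, embedded, homologically area-minimizing (hence stable) minimal hypersurface $\Sigma_\infty\subset M$ with $\area(\Sigma_\infty)\le\area(\Sigma_0)$ and with $f|_{\Sigma_\infty}$ still of nonzero degree onto $T^{n-1}$. \textbf{The main difficulty is to prove that $\Sigma_\infty$ is compact.} A priori the $\Sigma_L$ could slide off towards the ends $h\to\pm\infty$ --- this is exactly what happens on the model space $T^{n-1}\times\mathbf R$, where area-minimizers in the relevant class are not unique. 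The remedy is to exploit the properness of $f$: by comparing $\Sigma_L$ with the surface obtained by truncating it along a slice $\Sigma_L\cap h^{-1}(s)$ and capping off inside the level set $h^{-1}(s)$ --- equivalently, by replacing the bare area functional with a suitably penalized functional of $\mu$-bubble type whose weight is built from $h$ --- one shows that all the $\Sigma_L$ are contained in a fixed slab $h^{-1}([-C,C])$ with $C$ independent of $L$. Hence $\Sigma_\infty\subset h^{-1}([-C,C])$, which is compact, and being complete and closed as a subset, $\Sigma_\infty$ is itself compact.

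Once $\Sigma_\infty$ is a closed, two-sided, stable minimal hypersurface of dimension $n-1\le 6$ admitting a nonzero degree map to $T^{n-1}$ inside an ambient manifold with $R_M\ge 0$, the classical dimension-reduction arguments apply. When $n-1\ge 3$, stability produces, via the first eigenfunction of the Jacobi operator and a conformal change, a metric on $\Sigma_\infty$ with nonnegative scalar curvature; when $n-1=2$, the stability inequality with test function $1$, the Gauss equation and Gauss--Bonnet give the same thing. By the results recalled in the introduction (Schoen--Yau, Gromov--Lawson) in their standard formulation for domains admitting a nonzero degree map to a torus, $\Sigma_\infty$ is flat; and since a compact flat manifold admitting a nonzero degree map to $T^{n-1}$ has first Betti number $n-1$, it is the $(n-1)$-torus. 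Being in the borderline case, all the rigidity identities hold along $\Sigma_\infty$: it is totally geodesic, its induced metric is flat, $R_M$ vanishes on it, and its Jacobi operator has first eigenvalue $0$, which together force $\Ric_M(\nu,\nu)\equiv 0$ on $\Sigma_\infty$.

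Finally one globalizes. Flowing $\Sigma_\infty$ along the normal exponential map and using that $\Sigma_\infty$ is homologically area-minimizing, a by-now-standard continuation argument shows that a neighbourhood of every leaf is a Riemannian product over a flat totally geodesic $(n-1)$-torus; the set of parameters for which this holds is open and closed, and since $(M,g)$ is complete and connected the normal geodesics exist for all time and the flow is a diffeomorphism $\Sigma_\infty\times\mathbf R\to M$. Therefore $(M,g)$ is globally isometric to the Riemannian product of $\Sigma_\infty$ --- a flat $(n-1)$-torus --- with $\mathbf R$, which is the assertion. The only ingredient beyond the classical minimal hypersurface method and the cited theorems is the compactness mechanism for $\Sigma_\infty$ described above, which is where the bulk of the work lies.
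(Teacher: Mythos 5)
Your overall scaffolding matches the paper's: approximate by constrained or penalized minimizers, extract a limit hypersurface, argue that the limit is a compact torus, then split. But the crucial compactness step --- which you correctly flag as ``where the bulk of the work lies'' --- is asserted rather than proved, and the assertion you make is not what the $\mu$-bubble method actually delivers.

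You claim that the penalized ($\mu$-bubble) minimizers are \emph{contained} in a fixed slab $h^{-1}([-C,C])$. This is not established, and it is not what the paper proves. The penalizing weight $h_\epsilon$ of Lemma~\ref{Lem: function h epsilon} degenerates to zero on every compact set as $\epsilon\to 0$, so it provides no uniform slab confinement. What the paper extracts from the penalization, via the stability inequality, the warped-product trick $\bar\Sigma_k=\hat\Sigma_k\times\mathbf S^1$ with the first eigenfunction $\hat u_k$ as warp factor, and Schoen--Yau applied to $\bar\Sigma_k\to T^n$, is only that each $\hat\Sigma_k$ must \emph{intersect} a fixed compact set $K$ and that $\lambda_{1,k}=O(\epsilon_k)$. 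The diameter of $\hat\Sigma_k$ is a priori unbounded and the locally graphical limit $\hat\Sigma$ could a priori be a complete noncompact hypersurface (``loss of topology at infinity''), possibly even disconnected. Indeed, your own observation that in the model space area-minimizers slide freely already shows that no confinement to a fixed slab can be extracted from the area functional alone, and the vanishing penalization does not rescue it.

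The paper closes this gap with a genuinely different mechanism. After anchoring basepoints $\hat p_k\in\hat\Sigma_k\cap K$ and passing to a pointed limit $(\hat\Sigma',\hat g,\hat p)$, it shows the limit warped product $(\bar\Sigma,\bar g)=(\hat\Sigma'\times\mathbf S^1,\hat g+\hat u^2\,\mathrm ds^2)$ is Ricci-flat. This uses Proposition~\ref{Prop: Ricci flat limit}, a Kazdan-style perturbation argument: a nonzero $\Ric(\bar g)$ at a point would let one perturb the metric to make the conformal Laplacian positive on a ball, transfer that positivity back to $\bar\Sigma_k$ for large $k$, and contradict Schoen--Yau since $\bar\Sigma_k$ maps to $T^n$ with nonzero degree. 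Then an integration-by-parts argument, using only the uniform area bound $\area(\hat\Sigma')\le\area(\Sigma)$, forces the warp factor $\hat u$ to be constant, so $\hat\Sigma'$ itself has $\Ric\ge 0$ and finite volume. Finally, Yau's theorem that a complete noncompact manifold with nonnegative Ricci curvature has infinite volume forces $\hat\Sigma'$ to be compact. Only then does one conclude $\hat\Sigma'=\hat\Sigma$, that $\hat\Sigma_k$ is eventually a graph over it, and that $\hat\Sigma$ admits a nonzero-degree map to $T^{n-1}$; the rest (Schoen--Yau flatness and the foliation argument) proceeds essentially as you describe. In short: your proof would be complete if the fixed-slab confinement held, but it does not, and the paper's replacement (Ricci-flat limit plus finite volume plus Yau) is the actual content of the theorem.
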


Let us sketch the idea for our proof. Just as the closed case, we tend to find an area-minimizing $T^{n-1}$ in this complete Riemannian manifold, after that the desired consequence follows from the standard variation argument and foliation argument. However, an obvious difficulty here is the lack of compactness to minimize the area functional in a chosen homology class. For a minimizing sequence it is possible that the area loses entirely at infinity, which is totally different from the closed case. To overcome this difficulty we follow the idea from the author's previous work in \cite{Z2020width}, where an optimal width estimate was obtained for open manifolds with positive sectional curvatures. The idea is that we can construct suitable $h$-minimizing boundaries in a chosen homology class as good approximations for area-minimizing hypersurfaces. The benefit of doing so is that all these $h$-minimizing boundaries will intersect with a fixed compact subset after a careful choice for function $h$. This is the key to make a limiting procedure possible. However, there is still a concern about the loss of topology at infinity and the challenge here is to obtain the compactness of the limit hypersurface. Different from the situation in \cite{Z2020width}, the compactness comes from a more delicate analysis on the limit hypersurface. The observation now is that the limit hypersurface has to be Ricci-flat since it is the limit of a sequence of $T^{n-1}$ with almost nonnegative scalar curvature and it has finite volume from our construction.
With the help of Calabi-Yau linear volume growth estimate for complete open Riemannian manifolds with nonnegative Ricci curvature, the limit hypersurface turns out to be compact.

We believe that the new idea may lead to potential application to other geometric problems. Along with the same idea, we generalize the optimal $2$-systole estimate to complete open Riemannian manifolds with positive scalar curvature.
In three dimension, we prove the following analogy to the Bray-Brendle-Neves $2$-systole estimate.

\begin{theorem}\label{Thm: main2}
Let $(M^3,g)$ be a complete open Riemannian $3$-manifold with nontrivial second homotopy group and positive scalar curvature. Then
\begin{equation}\label{Eq: main2}
\inf_M R(g)\cdot\sys_2(M,g)\leq 8\pi.
\end{equation}
The equality holds if and only if the universal covering of $(M,g)$ is isometric to the standard Riemannian product $\mathbf S^2\times \mathbf R$ up to scaling.
\end{theorem}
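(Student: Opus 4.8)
The plan is to transplant, to the $\mathbf S^2$-systole problem, the strategy sketched above for Theorem~\ref{Thm: main1}. First I would dispose of the preliminary reductions. If $R_0:=\inf_M R(g)=0$ then \eqref{Eq: main2} holds with room to spare and the equality case is vacuous, so I may assume $R_0>0$; rescaling $g$, for which both sides of \eqref{Eq: main2} are invariant, I normalize $R_0=2$ and aim at $\sys_2(M,g)\le 4\pi$. Passing to the universal cover $(\widetilde M,\widetilde g)$ changes neither this normalization nor $\sys_2$, because $\pi_2$ is unchanged and essential test spheres lift and project with the same area; I therefore assume $M$ is simply connected, hence orientable, and by the Hurewicz theorem $\pi_2(M)\cong H_2(M;\mathbf Z)$. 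Fix a nonzero class $\alpha\in H_2(M;\mathbf Z)$. Since $M$ is simply connected, every embedded closed surface is two-sided and separating, so $\alpha$ is represented by $\partial\Omega_0$ for some open set $\Omega_0\subset M$, and it will then suffice to show that under equality $M$ itself is isometric to $\mathbf S^2(1)\times\mathbf R$.

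The heart of the argument is to produce, despite the absence of compactness, a minimal $2$-sphere of area at most $4\pi$ lying in the homology class $\alpha$. Following the warped $\mu$-bubble construction of \cite{Z2020width}, I would fix an exhaustion of $M$ by precompact open sets and, for each $i$, a warping function $h_i$ which is essentially $0$ on a fixed compact core but is driven to $\pm\infty$ near the $i$-th barrier while obeying the differential inequality needed to make $\mu$-bubbles work; then I minimize $\mathcal A_i(\Omega):=\mathrm{Per}(\Omega)-\int_\Omega h_i$ among all $\Omega$ with $\Omega\triangle\Omega_0$ precompact. The minimizer has smooth compact boundary $\Sigma_i$ with $[\Sigma_i]=\alpha$ and mean curvature $h_i$, it is stable for $\mathcal A_i$, and — this is exactly the gain from the careful choice of $h_i$ — it meets a fixed compact set $K_0$ while having uniformly bounded area. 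By the area bound and the curvature estimates for stable prescribed-mean-curvature surfaces, a subsequence of the $\Sigma_i$ converges in the local smooth topology (possibly with multiplicity) to a nonempty complete embedded minimal surface $\Sigma_\infty$ — minimal because $h_i\to 0$ locally — which meets $K_0$, has finite total area, and, by tracking the class $\alpha$ through the limit as in \cite{Z2020width}, contains a homologically essential component.

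The decisive, and hardest, point is that $\Sigma_\infty$ is in fact compact — that no topology is lost at infinity. Feeding $\phi\equiv 1$ into the stability inequality for $\Sigma_\infty$ together with the Gauss equation and $R(g)\ge 2$ gives $\int_{\Sigma_\infty}K_{\Sigma_\infty}\ge\area(\Sigma_\infty)$ and, more strongly, forces the intrinsic operator $-\Delta_{\Sigma_\infty}+K_{\Sigma_\infty}$ to be bounded below by $1$. Passing to a positive solution of $-\Delta u+(K_{\Sigma_\infty}-1)u=0$ as in Fischer--Colbrie--Schoen and changing the metric conformally by $u$, one obtains a metric on $\Sigma_\infty$ of positive Gauss curvature; combined with the finiteness of the area and the Calabi--Yau linear volume growth estimate for complete open manifolds with nonnegative Ricci curvature, this forces $\Sigma_\infty$ to be closed. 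This is the place where, as for Theorem~\ref{Thm: main1}, the new trick is needed, and I expect it to be the main obstacle. Once $\Sigma_\infty$ is a closed stable minimal surface, $\int_{\Sigma_\infty}K_{\Sigma_\infty}\ge\area(\Sigma_\infty)>0$ and Gauss--Bonnet give $\chi(\Sigma_\infty)>0$, so (using orientability) every component is a $2$-sphere and $\area(\Sigma_\infty)\le\int_{\Sigma_\infty}K_{\Sigma_\infty}=4\pi$ on the essential component; since $M$ is simply connected that component is homotopically essential, and it witnesses $\sys_2(M,g)\le 4\pi$, which is \eqref{Eq: main2}.

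Finally, for the rigidity, suppose $R_0=2$ and $\sys_2(M,g)=4\pi$. Then the essential component $\Sigma_\infty$ has area exactly $4\pi$, which turns every inequality above into equality: $\Sigma_\infty$ is totally geodesic, $R(g)\equiv 2$ and $K_{\Sigma_\infty}\equiv 1$ along $\Sigma_\infty$, and $\phi\equiv 1$ lies in the kernel of the Jacobi operator, hence $\Ric(\nu,\nu)\equiv 0$ along $\Sigma_\infty$; in particular $\Sigma_\infty$ is a round unit $2$-sphere. I would then run the Bray--Brendle--Neves foliation argument \cite{BBN2010}: foliate a neighborhood of $\Sigma_\infty$ by constant-mean-curvature spheres via the implicit function theorem, observe that each leaf is again essential so has area $\ge\sys_2=4\pi$, and use the first and second variation of area to force each leaf to realize the equality and hence to be totally geodesic with $R(g)\equiv 2$ and vanishing normal Ricci curvature along it; this yields that a neighborhood of $\Sigma_\infty$ splits isometrically as $\mathbf S^2(1)\times(-\varepsilon,\varepsilon)$. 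A continuation argument then shows that the set of parameters over which the splitting persists is open and closed — completeness of $M$ together with the uniform area bound $4\pi$ (hence a uniform diameter bound) on the leaves rules out degeneration, and since $M$ is simply connected the leaf space must be $\mathbf R$ — so $M$ is isometric to $\mathbf S^2(1)\times\mathbf R$, and thus the universal cover of the original manifold is $\mathbf S^2\times\mathbf R$ up to scaling. The converse is immediate: on $\mathbf S^2(r)\times\mathbf R$ one has $R\equiv 2r^{-2}$, while any essential map $\mathbf S^2\to\mathbf S^2(r)\times\mathbf R$ has $\mathbf S^2(r)$-component of nonzero degree and area at least that of this component, so $\sys_2=4\pi r^2$ and the product equals $8\pi$.
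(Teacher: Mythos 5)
Your overall strategy (construct $\mu$-bubble minimizers, pass to a limit, prove compactness, then run the Bray--Brendle--Neves foliation) is close in spirit to the paper, but the step you flag yourself as the ``decisive, and hardest, point'' --- the compactness of $\Sigma_\infty$ --- is where your argument actually breaks down, and the paper handles it quite differently. Your plan is to plug $\phi\equiv 1$ into the stability inequality for $\Sigma_\infty$, conformally rescale by a Fischer--Colbrie--Schoen solution $u$ of $-\Delta u+(K_{\Sigma_\infty}-1)u=0$ to get positive Gauss curvature, and then invoke Calabi--Yau together with the finite area of $\Sigma_\infty$. This has two gaps. First, $\phi\equiv 1$ is not an admissible test function on a noncompact $\Sigma_\infty$, so the inequality $\int K_{\Sigma_\infty}\ge\area(\Sigma_\infty)$ is circular until compactness is already established. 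Second, and more fatally, the conformal metric $u^2 g_{\Sigma_\infty}$ has area $\int u^2\,\mathrm d\mu$, which is \emph{not} controlled by $\area(\Sigma_\infty)=\int 1\,\mathrm d\mu$; a complete noncompact surface of positive Gauss curvature (a paraboloid, say) is perfectly consistent with Calabi--Yau --- it just has infinite area --- and nothing prevents $u$ from growing fast enough at infinity that $\int u^2=\infty$. So the Calabi--Yau trick, which the author does use for Theorem~\ref{Thm: main1} (where the limit is genuinely Ricci-flat with finite volume in its \emph{original} metric), does not transfer to this 3-dimensional spherical setting.

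The paper's argument sidesteps both issues. For the inequality, no limit is taken at all: each $\mu$-bubble boundary $\hat\Sigma_k$ is already a closed surface, Gauss--Bonnet on $\hat\Sigma_k$ directly gives $8\pi\ge\int_{\hat\Sigma_k}R(g)-C\epsilon_k$, so for large $k$ each $\hat\Sigma_k$ is a homotopically essential sphere of area at most $8\pi(\inf R(g)-C\epsilon_k)^{-1}$, and \eqref{Eq: main2} follows by letting $k\to\infty$. The limit $\hat\Sigma$ is only needed for the equality case, and there its compactness is obtained via Lemma~\ref{Lem: stable min}: a complete two-sided stable minimal surface in a $3$-manifold with \emph{uniformly} positive scalar curvature must be a sphere or a plane (Fischer--Colbrie--Schoen plus Carlotto--Chodosh--Eichmair), and the plane case is excluded by forming the warped product $\Sigma\times\mathbf S^1$ with metric $g_\Sigma+u^2\,\mathrm ds^2$, which has $R=R(g)+|A|^2\ge 2$ and is complete, contradicting Gromov's quadratic decay theorem. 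You would need to replace your conformal/Calabi--Yau step with an argument of this kind (or find a way to bound $\int u^2$, which does not seem available). The remainder of your sketch --- choosing an essential component, obtaining area exactly $4\pi$, CMC foliation, openness-and-closedness --- matches the paper's, which defers the foliation to \cite{BBN2010} once it is established that $\hat\Sigma$ is area-minimizing in its homotopy class.
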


For higher dimensions we establish the following theorem corresponding to the author's previous work in \cite{Z2020}.

\begin{theorem}\label{Thm: main3}
For $3\leq n\leq 7$, let $(M^n,g)$ be an orientable complete open Riemannian manifold with positive scalar curvature, which admits a smooth proper map $f:M\to \mathbf S^2\times T^{n-3}\times \mathbf R$ with nonzero degree. Then the second homotopy group is non-trivial and
\begin{equation}\label{Eq: main3}
\inf_M R(g)\cdot\sys_2(M,g)\leq 8\pi.
\end{equation}
The equality holds if and only if the universal covering of $(M,g)$ is isometric to the standard Riemannian product $\mathbf S^2\times \mathbf R^{n-2}$ up to scaling.
\end{theorem}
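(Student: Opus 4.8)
The plan is to combine the dimension‑reduction scheme behind the author's closed‑manifold theorem in \cite{Z2020} with the $h$‑minimizing boundary (``$\mu$‑bubble'') technique used in Theorem \ref{Thm: main1}. The nontriviality of $\pi_2(M)$ is essentially formal: perturbing $f$ to be transverse, $f^{-1}(\mathbf S^2\times\{q\})$ is a closed surface carrying a map to $\mathbf S^2$ of the same nonzero degree as $f$, and at least one of its components is homotopically nontrivial in $M$ (otherwise one could cap off a nontrivial self‑map of $\mathbf S^2$), so $\pi_2(M)\neq 0$; this also records an essential $2$‑sphere that is detected by $f$, which is what the systole bound will ultimately exploit. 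After rescaling we may assume $\inf_M R(g)=2$ and must prove $\sys_2(M,g)\le 4\pi$ together with the stated rigidity.

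Let $t$ be the $\mathbf R$‑coordinate on $\mathbf S^2\times T^{n-3}\times\mathbf R$ and put $u:=t\circ f\colon M\to\mathbf R$, which is proper; thus every slab $U_\Lambda:=u^{-1}\big((-\Lambda,\Lambda)\big)$ and every fiber $u^{-1}(s)$ has compact closure, and the restriction of $f$ to a fiber is a map to $\mathbf S^2\times T^{n-3}$ of degree $\deg f$. Following \cite{Z2020width}, in each $U_\Lambda$ one minimizes $\Omega\mapsto\mathcal H^{n-1}(\partial^*\Omega\cap U_\Lambda)-\int_\Omega h_\Lambda\,d\mathcal H^n$ over Caccioppoli sets that agree with $\{u<0\}$ off a compact set, with a weight $h_\Lambda=h_\Lambda(u)$ chosen so that: it blows up at the walls $u=\pm\Lambda$, producing barriers and a smooth ($n-1\le6$) two‑sided stable minimizer $\Sigma_\Lambda=\partial^*\Omega_\Lambda$ of prescribed mean curvature $h_\Lambda$; the drift it induces forces $\Sigma_\Lambda$ to meet a fixed compact set $K$ (a neighbourhood of $u^{-1}(0)$) independently of $\Lambda$; and $\mathcal H^{n-1}(\Sigma_\Lambda)$ is bounded uniformly in $\Lambda$ by comparison with the fixed fiber $u^{-1}(0)$. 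Since $R(g)>0$, one also arranges the compatibility of $h_\Lambda$ with $R(g)$ needed to exploit stability. As $\Lambda\to\infty$ the amplitude of $h_\Lambda$ on compact sets tends to $0$, and a subsequence of the $\Sigma_\Lambda$ converges locally graphically to a complete, two‑sided, stable minimal hypersurface $\Sigma_\infty^{n-1}\subset M$ which meets $K$ and has finite $(n-1)$‑volume.

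The crux --- exactly as in the proof of Theorem \ref{Thm: main1} --- is that locally graphical convergence may leak topology, and with it the nonzero degree, to infinity, so one must show that $\Sigma_\infty$ is \emph{compact}, equivalently that $u$ is bounded on $\Sigma_\infty$. Here the Ricci‑flat argument of Theorem \ref{Thm: main1} is unavailable, but the same philosophy applies: one pushes the Schoen--Yau reduction down inside $\Sigma_\infty$, using its stability together with $R(g)>0$ to produce iterated $\mu$‑bubbles dual to the circle factors of $T^{n-3}$, arriving at a complete surface $\hat\Sigma$ of finite area which, in the first‑eigenvalue sense inherited from the iterated stability inequalities, has positive scalar curvature and still sees a nonzero degree over $K$. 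Finite area makes $\hat\Sigma$ parabolic, so inserting cutoff approximations of the constant $1$ into the stability inequality gives $\int_{\hat\Sigma}K\ge\tfrac12\inf_M R(g)\cdot\area(\hat\Sigma)>0$; on the other hand the Cohn--Vossen inequality yields $\int_{\hat\Sigma}K\le 2\pi\chi(\hat\Sigma)$, and for a connected orientable complete surface with positive total curvature this forces $\hat\Sigma$ to be the closed surface $\mathbf S^2$. Compactness then propagates back up the reduction, so $\Sigma_\infty$ is a closed hypersurface admitting a nonzero‑degree map to $\mathbf S^2\times T^{n-3}$; applying the closed‑manifold theorem of \cite{Z2020} to it --- or, equivalently, Gauss--Bonnet on $\hat\Sigma$ --- and keeping track of how the auxiliary conformal factors from the first eigenfunctions transform areas (which only improves the inequalities), one gets $\area_g(\hat\Sigma)\le 4\pi$ for an essential $2$‑sphere detected by $f$, hence $\sys_2(M,g)\le4\pi$, i.e.\ \eqref{Eq: main3}.

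For the rigidity, equality forces every inequality above to be an equality: each stability inequality along the reduction is saturated with constant first eigenfunction, every $\mu$‑bubble is totally geodesic with vanishing second fundamental form, $R(g)\equiv2$ along them and in the normal directions, and the $\mu$‑bubble trick at the top level degenerates to a genuine minimal slice. As in \cite{BBN2010} and \cite{Z2020}, this makes $g$ split off the relevant Euclidean directions locally, and globalizing on the universal cover identifies $(\tilde M,\tilde g)$ with the standard product $\mathbf S^2\times\mathbf R^{n-2}$ up to scaling. The step I expect to be genuinely delicate is precisely the compactness of $\Sigma_\infty$: one must verify that the iterated $\mu$‑bubbles remain well defined, smooth and stable on a possibly noncompact ambient hypersurface, that the uniform lower area bound near $K$ survives every stage, and that $\hat\Sigma$ is indeed parabolic so that testing with the constant $1$ is legitimate.
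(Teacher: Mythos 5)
Your skeleton (slab $\mu$-bubbles, forced intersection with a compact set, locally graphical convergence) tracks the paper's strategy, and your formal argument for $\pi_2(M)\neq 0$ is fine. But your plan for the crucial compactness step has a genuine gap. You assert that for a connected orientable complete surface $\hat\Sigma$ with positive total curvature, Cohn--Vossen forces it to be the closed sphere $\mathbf S^2$. This is false: Cohn--Vossen gives only $\int_{\hat\Sigma}K \le 2\pi\chi(\hat\Sigma)$, which for a noncompact topological plane reads $\int K\le 2\pi$ and is perfectly compatible with positive total curvature; a complete finite-area cusp-ended plane realizes this. Your stability inequality $\int K\ge \tfrac{1}{2}\inf R(g)\cdot\area(\hat\Sigma)$ gives no contradiction either, because there is no a priori lower bound on $\area(\hat\Sigma)$ --- the $2$-systole hypothesis bounds areas of essential spheres, not of planes. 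Furthermore, ``compactness propagates back up the reduction'' is not justified: a compact iterated $\mu$-bubble sitting inside $\Sigma_\infty$ does not force $\Sigma_\infty$ itself to be compact. So the Schoen--Yau iteration plus Cohn--Vossen route does not close.

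The paper also takes a different route to the inequality \eqref{Eq: main3} that avoids needing compactness of the limit at all: it applies Theorem \ref{Thm: 2 systole n manifold} directly to each \emph{closed} warped hypersurface $\bar\Sigma_k=\hat\Sigma_k\times\mathbf S^1$, yielding $\sys_2'(\hat\Sigma_k,\hat g_k,f_1|_{\hat\Sigma_k})\le 8\pi(\inf R(g)-C\epsilon_k)^{-1}$, and then sends $k\to\infty$. Compactness is needed only for the equality case, where it is obtained via Proposition \ref{Prop: vanish normal ricci limit}: the inherited partial $2$-systole constraint on the $\hat\Sigma_k$, combined with a Kazdan-type deformation along the normal Ricci direction together with a warped-product increase of scalar curvature, shows that the limit warped manifold $\bar\Sigma=\hat\Sigma\times\mathbf S^1$ has nonnegative Ricci curvature with $\Ric_{\bar g}(\partial_s,\partial_s)\equiv 0$; this forces $\hat u$ to be constant and $\hat\Sigma$ to have nonnegative Ricci curvature, so compactness follows from finite area and the Calabi--Yau linear volume growth theorem. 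This Ricci-curvature-from-$2$-systole estimate is the paper's key new idea and is exactly the ingredient your proposal is missing.
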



This article will be organized as follows. In section 2, we will construct desired approximation $h$-minimizing boundaries for our later proof. In section 3, we will show Ricci curvature estimates for some manifolds those are limits of closed manifolds with certain topological restriction and curvature conditions. This plays a crucial role in our proof to obtain the compactness of the limit from our constructed $h$-minimizing boundaries. In section 4, we give a detailed proof for our main theorems.

\section*{Acknowledgement}
This work is partially supported by China Scholarship Council and the NSFC grants No. 11671015 and 11731001. The author would like to thank Professor Andr\'e Neves for many helpful conversations. He is also grateful to Professor Yuguang Shi for constant encouragements.

\section{The approximation $h$-minimizing boundary}\label{Sec: 2}
In this section, we will construct desired approximation $h$-minimizing boundaries for our later proof. Assume that $(M^n,g)$ is an orientable complete open Riemannian manifold and $\Sigma\subset M$ is an orientable closed hypersurface associated with a surjective signed distance function.

First we modify the signed distance function to a smooth one.
\begin{lemma}\label{Lem: function phi}
There is a proper and surjective smooth function $\phi:M\to\mathbf R$ such that $\phi^{-1}(0)=\Sigma$ and $\lip\phi<1$.
\end{lemma}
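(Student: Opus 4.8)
The plan is to start from a \emph{surjective signed distance function}, by which I understand the function $\rho:M\to\mathbf R$ defined by $\rho(x)=\dist(x,\Sigma)$ if $x$ lies on one side of $\Sigma$ and $\rho(x)=-\dist(x,\Sigma)$ if $x$ lies on the other side (this presupposes $\Sigma$ separates $M$, which is part of the hypothesis that there is an associated signed distance function). This $\rho$ is globally $1$-Lipschitz, proper because of surjectivity and the triangle inequality (sublevel sets are closed and bounded, hence compact by completeness and Hopf--Rinow), satisfies $\rho^{-1}(0)=\Sigma$, and is surjective. The only defect is that it need not be smooth. So the entire content of the lemma is to smooth $\rho$ while keeping properness, surjectivity, the zero set, and a Lipschitz bound strictly below $1$.

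First I would produce a smooth approximation of $\rho$ that is uniformly close and has Lipschitz constant close to $1$. The standard tool is a partition-of-unity mollification: cover $M$ by countably many charts $\{U_i\}$ with a subordinate smooth partition of unity $\{\chi_i\}$, locally finite, and on each chart convolve $\rho$ with a mollifier at a scale $\varepsilon_i$ chosen small enough (depending on $i$) that the mollified function $\rho_i$ satisfies $|\rho_i-\rho|<\delta_i$ and $\lip(\rho_i|_{U_i})<1+\delta_i$ on a neighborhood of $\mathrm{supp}\,\chi_i$, for a rapidly decreasing sequence $\delta_i\to 0$; then set $\tilde\rho=\sum_i\chi_i\rho_i$. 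A routine estimate gives $|\tilde\rho-\rho|<\delta$ globally for any prescribed $\delta>0$ and, since $\nabla\tilde\rho=\sum_i(\chi_i\nabla\rho_i+\rho_i\nabla\chi_i)=\sum_i\chi_i\nabla\rho_i+\sum_i(\rho_i-\tilde\rho)\nabla\chi_i$, together with $\sum_i\nabla\chi_i=0$, one controls $|\nabla\tilde\rho|\le (1+\delta')$ once the scales are chosen fine enough relative to the (locally bounded) gradients of the $\chi_i$. Then I would rescale: put $\psi=(1-\delta')\,\tilde\rho$, which has $\lip\psi<1$ and is still uniformly close to a multiple of $\rho$, hence still proper and surjective (properness and surjectivity are stable under bounded perturbation plus a positive rescaling, using that $\rho$ is surjective and proper).

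The one genuinely delicate point, and the step I expect to be the main obstacle, is arranging $\phi^{-1}(0)=\Sigma$ \emph{exactly} while keeping smoothness and the Lipschitz bound — mollification will in general move the zero set. The fix is to correct $\psi$ near $\Sigma$: since $\rho$ is smooth and equals the signed distance in a tubular neighborhood of $\Sigma$ (where $|\nabla\rho|=1$), one can interpolate, using a smooth cutoff $\eta$ supported in this tubular neighborhood with $\eta\equiv 1$ on a smaller neighborhood, between $\psi$ away from $\Sigma$ and the genuine signed distance $\rho$ near $\Sigma$, i.e. set $\phi=\eta\,\rho+(1-\eta)\psi$ after first having arranged $\psi$ to be $C^0$-close enough to $\rho$ that no new zeros are created in the transition region; one checks $\phi$ is smooth (both pieces are smooth where $\eta\in(0,1)$), $\phi^{-1}(0)=\Sigma$ (near $\Sigma$ it equals $\rho$; away from the tube it has the same sign as $\rho$ by the $C^0$ bound), and $\lip\phi<1$ (a convex-combination-type estimate, using again $|\nabla\rho|\le1$, $\lip\psi<1$, and that $\rho-\psi$ is small where $\nabla\eta\neq0$). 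Finally properness and surjectivity survive because $\phi$ agrees with the proper surjective $\psi$ outside a compact neighborhood of $\Sigma$ and the modification on that compact set changes nothing about the behaviour at infinity, while surjectivity onto a neighborhood of $0$ is guaranteed by $\phi|_\Sigma=0$ together with the intermediate value theorem along the distance function. Choosing all the small parameters in the right order — first the rescaling constant $\delta'$, then the $C^0$ error $\delta$ small relative to $\delta'$ and to the size of the tubular neighborhood — makes everything consistent.
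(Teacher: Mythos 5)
There is a genuine gap at the final interpolation step. You set $\phi=\eta\,\rho+(1-\eta)\psi$ with $\eta\equiv1$ on a neighborhood of $\Sigma$, and you correctly observe that $\phi\equiv\rho$ there. But on the tubular neighborhood where the signed distance $\rho$ is smooth one has $|\nabla\rho|\equiv1$, so your $\phi$ satisfies $\lip\phi\geq1$, and the strict inequality $\lip\phi<1$ that the lemma requires fails. Your ``convex-combination-type estimate'' only uses $|\nabla\rho|\leq1$ and thus can never yield better than $\lip\phi\leq1$. In other words, you rescale $\tilde\rho$ down to $\psi=(1-\delta')\tilde\rho$ to buy the strict Lipschitz bound, but then you glue in the unrescaled $\rho$ and lose it again. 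The fix is small: interpolate with $(1-\delta')\rho$ in place of $\rho$ (this still vanishes exactly on $\Sigma$, still makes the transition-region error small since $(1-\delta')\rho-\psi=(1-\delta')(\rho-\tilde\rho)$, and near $\Sigma$ gives $|\nabla\phi|=1-\delta'<1$), or equivalently postpone the global rescaling until after the gluing.

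For comparison, the paper avoids this order-of-operations issue by a different additive decomposition. It first kills $\rho$ in a tubular neighborhood of $\Sigma$, setting $\phi_1=(1-\eta\circ\rho)\rho$, mollifies this to a smooth $\phi_2$ that still vanishes near $\Sigma$ with a universal Lipschitz bound $\lip\phi_2\leq 2C$, and then adds the explicit smooth correction $\int_0^\rho\eta(\tau)\,\mathrm d\tau$, which equals $\rho$ near $\Sigma$, is locally constant far from $\Sigma$, and has gradient norm at most $1$ everywhere. The two summands have essentially disjointly supported gradients near $\Sigma$, so the sum obeys a single uniform Lipschitz bound $2C+1$, and only \emph{then} does one divide by $4C+4$; this produces $\lip\phi<1$ globally while near $\Sigma$ the function is just a fixed positive multiple of $\rho$, so the zero set is exactly $\Sigma$ with no competition between the two requirements. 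Your plan is workable and close in spirit, but you must compress the near-$\Sigma$ piece along with everything else.
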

\begin{proof}
Since $\rho:M\to\mathbf R$ is the signed distance function to the given closed hypersurface $\Sigma$, it is a proper Lipschitz function with $\lip\rho=1$. Fix a small positive constant $\epsilon<1$ such that $\rho$ is smooth in the region $\{-2\epsilon< \rho<2\epsilon\}$ and define a smooth cut-off function $\eta$ such that $0\leq \eta\leq 1$, $\eta\equiv 1$ in $(-\epsilon,\epsilon)$ and $\eta\equiv 0$ outside $(-2\epsilon,2\epsilon)$.
Let $\phi_1=(1-\eta\circ\rho)\rho$, then $\phi_1$ vanishes round $\Sigma$ and $\lip\phi_1\leq C$ for some universal constant $C$. Through standard mollification we can obtain a smooth function $\phi_2$ vanishing around $\Sigma$ with $\lip\phi_2\leq 2C$. Moreover, $\phi_2$ has a fixed sign on either side of $\Sigma$. Define
$$
\phi=\frac{1}{4C+4}\left(\phi_2+\int_0^\rho\eta(\tau)\,\mathrm d\tau\right).
$$
It is easy to verify that $\phi$ satisfies all our requirements.
\end{proof}

Fix such a function $\phi$ and denote
$$\Omega_0=\{x\in M:\phi(x)<0\}.$$
Given any smooth function $h:(-T,T)\to \mathbf R$, we introduce the following functional
\begin{equation*}
\mathcal A^h(\Omega)=\mathcal H^{n-1}(\partial^*\Omega)-\int_{ M}(\chi_\Omega-\chi_{\Omega_0})h\circ\phi\,\mathrm d\mathcal H^n,
\end{equation*}
on
$$
\mathcal C_T=\left\{\text{Caccipoli set}\,\Omega\subset M:\Omega\Delta\Omega_0\Subset\phi^{-1}\left((-T,T)\right)\right\}.
$$
For the minimizing problem of the functional $\mathcal A^h$ on $\mathcal C_T$, we have the following existence result, which is just Proposition 2.1 in \cite{Z2020width}.
\begin{lemma}\label{Lem: existence of mu bubble}
Assume that $\pm T$ are regular values of $\phi$ and also the function $h$ satisfies
\begin{equation}\label{Eq: h blow up}
\lim_{t\to -T}h(t)=+\infty\quad \text{\rm and}\quad\lim_{t\to T}h(t)=-\infty,
\end{equation}
then
there exists a smooth minimizer $\hat\Omega$ in $\mathcal C_T$ for $\mathcal A^h$.
\end{lemma}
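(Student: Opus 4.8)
The plan is to prove the existence of a minimizer for $\mathcal{A}^h$ on $\mathcal{C}_T$ by the direct method in the calculus of variations, combining standard compactness of Caccioppoli sets with regularity theory for almost-minimizing boundaries (valid since $3 \leq n \leq 7$ means there are no singularities). Since this is quoted as Proposition 2.1 of \cite{Z2020width}, I expect the proof to be brief, but here is how I would set it up.

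First I would establish that the functional is bounded below on $\mathcal{C}_T$ and that minimizing sequences have uniformly bounded perimeter. Since any competitor $\Omega$ satisfies $\Omega \Delta \Omega_0 \Subset \phi^{-1}((-T,T))$, the potential term $\int_M (\chi_\Omega - \chi_{\Omega_0}) h\circ\phi \, \mathrm{d}\mathcal{H}^n$ is an integral over the fixed precompact region $\phi^{-1}((-T,T))$; comparing with the competitor $\Omega_0$ itself (for which $\mathcal{A}^h(\Omega_0) = \mathcal{H}^{n-1}(\partial^*\Omega_0 \cap \phi^{-1}((-T,T))) < \infty$, using that $\pm T$ are regular values so $\partial\Omega_0$ meets the slab in a set of finite measure), one gets $\inf \mathcal{A}^h > -\infty$ provided $h \circ \phi$ is integrable on the slab. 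The subtlety is precisely that $h$ blows up at $\pm T$ by \eqref{Eq: h blow up}, so I would use this blow-up \emph{in our favor}: if a competitor $\Omega$ pushes mass of $\chi_\Omega - \chi_{\Omega_0}$ toward the boundary $\phi = \pm T$, the sign of $h$ there (positive near $-T$, negative near $T$) makes $-\int(\chi_\Omega-\chi_{\Omega_0}) h\circ\phi$ large and positive, which both bounds the functional below and confines minimizing sequences away from the slab boundary — this is the mechanism that rules out the symmetric-difference escaping to $\phi^{-1}(\{\pm T\})$.

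Next I would extract a subsequential limit: a minimizing sequence $\Omega_k$ has $\mathcal{H}^{n-1}(\partial^*\Omega_k)$ uniformly bounded, so by the compactness theorem for sets of finite perimeter, $\chi_{\Omega_k} \to \chi_{\hat\Omega}$ in $L^1_{\mathrm{loc}}$ for some Caccioppoli set $\hat\Omega$, with lower semicontinuity of perimeter. The confinement from the previous paragraph ensures $\hat\Omega \Delta \Omega_0 \Subset \phi^{-1}((-T,T))$, so $\hat\Omega \in \mathcal{C}_T$; continuity of the potential term under $L^1$ convergence (again using that the slab is precompact and $h\circ\phi$ is locally bounded on it away from $\pm T$) together with lower semicontinuity of the perimeter gives $\mathcal{A}^h(\hat\Omega) \leq \liminf \mathcal{A}^h(\Omega_k)$, so $\hat\Omega$ is a minimizer.

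Finally I would upgrade $\hat\Omega$ to a smooth minimizer. The key point is that the potential term is generated by the bounded (on compact subsets of the open slab) function $h \circ \phi$, so $\partial^* \hat\Omega$ is a hypersurface of prescribed mean curvature, hence an almost-minimizing boundary in the sense of Almgren–De Giorgi; by the interior regularity theory for such boundaries and the dimensional restriction $n \leq 7$, the reduced boundary $\partial^*\hat\Omega$ is a smooth embedded hypersurface and the singular set is empty, so $\partial\hat\Omega = \partial^*\hat\Omega$ is smooth. I expect the main obstacle — and the genuinely nonstandard part relative to textbook existence results — to be the confinement argument: showing that the blow-up condition \eqref{Eq: h blow up} actually prevents the symmetric difference $\hat\Omega \Delta \Omega_0$ from touching $\phi^{-1}(\{\pm T\})$, so that $\hat\Omega$ is an \emph{interior} minimizer in $\mathcal{C}_T$ and no boundary obstacle-type singularities appear; everything else is a routine application of geometric measure theory.
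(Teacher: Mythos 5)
The paper does not prove this lemma itself --- it simply cites Proposition~2.1 of \cite{Z2020width} --- but the direct method you outline is the right framework and matches the cited reference, and you correctly isolate the confinement of $\hat\Omega\Delta\Omega_0$ to the interior of the slab as the one non-routine step.

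Where your sketch is substantively imprecise is in the confinement mechanism itself. You claim the blow-up of $h$ ``confines minimizing sequences away from the slab boundary'' because pushing mass toward $\phi^{-1}(\{\pm T\})$ makes $-\int(\chi_\Omega-\chi_{\Omega_0})h\circ\phi$ large and positive. That inference does not hold: a minimizing sequence $\Omega_k$ can send a \emph{vanishing} amount of mass arbitrarily close to $\phi^{-1}(\{\pm T\})$, so the corresponding potential contribution stays small, $\mathcal A^h(\Omega_k)$ stays bounded, and nothing in your argument prevents $\overline{\hat\Omega\Delta\Omega_0}$ from reaching $\phi^{-1}(\{\pm T\})$ in the limit --- in which case $\hat\Omega\notin\mathcal C_T$, and the prescribed curvature $h\circ\phi$ would be unbounded along $\partial\hat\Omega$, breaking the almost-minimizer regularity step as well. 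The sign structure of the blow-up is indeed the engine, but it acts through a \emph{replacement} (cut-off) comparison rather than a lower bound on the functional: given any competitor $\Omega\in\mathcal C_T$ one truncates along a level set $\phi^{-1}(t)$ for $t$ near $T$ (and symmetrically near $-T$), using the coarea formula and $\lip\phi<1$ to select a level where the newly created slice $\mathcal H^{n-1}(\Omega\cap\phi^{-1}(t))$ is dominated by the removed perimeter plus the nonnegative gain $-\int_{\Omega\cap\phi^{-1}((t,T))}h\circ\phi$. This produces, for every element of the minimizing sequence, a no-worse competitor whose symmetric difference with $\Omega_0$ lies in a \emph{fixed} interior slab $\phi^{-1}([-T+\delta,T-\delta])$; only then does the $L^1$ limit land in $\mathcal C_T$ with a bounded mean-curvature prescription, after which lower semicontinuity and the $n\le 7$ regularity theory close the proof exactly as you describe.
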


For our application we only focus on those functions constructed below rather than general ones.
\begin{lemma}\label{Lem: function h epsilon}
For any $\epsilon\in(0,1)$, there is a function $$h_\epsilon:\left(-\frac{1}{n\epsilon},\frac{1}{n\epsilon} \right)\to\mathbf R$$
such that
\begin{itemize}
\item[(1)] $h_\epsilon$ satisfies
\begin{equation*}
\frac{n}{n-1}h_\epsilon^2+2h_\epsilon'=n(n-1)\epsilon^2\quad \text{on}\quad \left(-\frac{1}{n\epsilon},-\frac{1}{2n}\right]\cup\left[\frac{1}{2n}, \frac{1}{n\epsilon}\right)
\end{equation*}
and there is a universal constant $C=C(n)$ so that
\begin{equation*}
\sup_{-\frac{1}{2n}\leq t\leq \frac{1}{2n}} \left|\frac{n}{n-1}h_\epsilon^2+2h_\epsilon'\right|\leq C\epsilon.
\end{equation*}
\item[(2)] $h_\epsilon'<0$ and
$$
\lim_{t\to\mp \frac{1}{n\epsilon}} h_\epsilon(t)=\pm\infty.
$$
\item[(3)] As $\epsilon\to 0$, $h_\epsilon$ converge smoothly to $0$ on any closed interval.
\end{itemize}
\end{lemma}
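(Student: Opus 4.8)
The plan is to solve the Riccati equation in item (1) explicitly on the two outer intervals by hyperbolic cotangents and then glue across the short middle interval by a convex combination, checking that all three requirements survive the gluing. Rewrite the equation as $h'=-\frac{n}{2(n-1)}\bigl(h^2-(n-1)^2\epsilon^2\bigr)$: its constant solutions are $h\equiv\pm(n-1)\epsilon$, and the non-constant solutions with $h'<0$ are exactly those with $|h|>(n-1)\epsilon$. Accordingly set
$$H_-(t)=(n-1)\epsilon\coth\Bigl(\tfrac{n\epsilon}{2}\bigl(t+\tfrac1{n\epsilon}\bigr)\Bigr),\qquad H_+(t)=-(n-1)\epsilon\coth\Bigl(\tfrac{n\epsilon}{2}\bigl(\tfrac1{n\epsilon}-t\bigr)\Bigr)=-H_-(-t).$$
A direct computation shows that $H_-$ solves the ODE on $\bigl(-\tfrac1{n\epsilon},+\infty\bigr)$, is strictly decreasing, stays larger than $(n-1)\epsilon$, and tends to $+\infty$ as $t\to-\tfrac1{n\epsilon}$; by the symmetry $t\mapsto-t$, $h\mapsto-h$ of the equation, $H_+$ solves it on $\bigl(-\infty,\tfrac1{n\epsilon}\bigr)$, is strictly decreasing, stays below $-(n-1)\epsilon$, and tends to $-\infty$ as $t\to\tfrac1{n\epsilon}$. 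I would then define $h_\epsilon:=H_-$ on $\bigl(-\tfrac1{n\epsilon},-\tfrac1{2n}\bigr]$ and $h_\epsilon:=H_+$ on $\bigl[\tfrac1{2n},\tfrac1{n\epsilon}\bigr)$, which already yields the exact ODE of (1) on the outer intervals, the sign $h_\epsilon'<0$ there, and the blow-up behaviour in (2).

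The crucial point is to extend $h_\epsilon$ over the middle interval $\bigl[-\tfrac1{2n},\tfrac1{2n}\bigr]$ keeping it smooth, strictly decreasing and small; here one exploits that $H_-$ and $H_+$ are \emph{both} smooth on all of $\bigl[-\tfrac1{2n},\tfrac1{2n}\bigr]$, their singularities sitting at $\mp\tfrac1{n\epsilon}$. Fix, depending only on $n$, a smooth non-increasing function $\theta:\bigl[-\tfrac1{2n},\tfrac1{2n}\bigr]\to[0,1]$ with $\theta\equiv1$ near $-\tfrac1{2n}$, $\theta\equiv0$ near $\tfrac1{2n}$, and $\theta(-t)=1-\theta(t)$, and set $h_\epsilon(t)=\theta(t)H_-(t)+\bigl(1-\theta(t)\bigr)H_+(t)$ on $\bigl[-\tfrac1{2n},\tfrac1{2n}\bigr]$. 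Since $\theta$ is locally constant near each endpoint, $h_\epsilon$ agrees with $H_-$ (resp. $H_+$) to infinite order near $-\tfrac1{2n}$ (resp. $\tfrac1{2n}$), so the three pieces combine into one $C^\infty$ function on $\bigl(-\tfrac1{n\epsilon},\tfrac1{n\epsilon}\bigr)$, which is odd because $H_+=-H_-(-\,\cdot\,)$ and $\theta(-t)=1-\theta(t)$. For monotonicity on the middle,
$$h_\epsilon'=\theta'\,(H_--H_+)+\theta H_-'+(1-\theta)H_+',$$
where $\theta'\le0$ and $H_--H_+>0$ while $H_-'<0$ and $H_+'<0$; hence every term is $\le 0$ and the last two form a convex combination of negative numbers, so $h_\epsilon'<0$ on the entire middle interval, which completes (2).

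It remains to control sizes. On any fixed $[-L,L]$, once $\tfrac1{n\epsilon}>L$ the argument of $\coth$ in $H_\pm$ lies in $\bigl[\tfrac12-\tfrac{n\epsilon L}{2},\tfrac12+\tfrac{n\epsilon L}{2}\bigr]$, on which $\coth$ and all its derivatives are uniformly bounded, while each differentiation of $H_\pm$ produces a factor $\tfrac{n\epsilon}{2}$; thus $\|H_\pm\|_{C^k([-L,L])}\le C(n,k,L)\,\epsilon$, and combined with $|\theta^{(j)}|\le C(n)$ this gives $\|h_\epsilon\|_{C^k([-L,L])}\le C(n,k,L)\,\epsilon$, which is exactly (3). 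Taking $L=\tfrac1{2n}$, $k=1$ and using $\epsilon<1$ yields $\bigl|\tfrac{n}{n-1}h_\epsilon^2+2h_\epsilon'\bigr|\le\tfrac{n}{n-1}\|h_\epsilon\|_{C^0}^2+2\|h_\epsilon'\|_{C^0}\le C(n)\,\epsilon$ on the middle interval, completing (1). The only genuine obstacle is this middle gluing: making it simultaneously $C^\infty$-compatible with the explicit outer solutions \emph{and} strictly monotone — and it is precisely the convex-combination trick with a non-increasing weight $\theta$ that achieves both at once.
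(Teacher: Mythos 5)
Your proposal is correct and follows essentially the same construction as the paper: the paper's $h_\epsilon^+(t)=(n-1)\epsilon\coth\bigl(\tfrac{n\epsilon t+1}{2}\bigr)$ and $h_\epsilon^-(t)=-(n-1)\epsilon\coth\bigl(\tfrac{-n\epsilon t+1}{2}\bigr)$ are exactly your $H_-$ and $H_+$, and the paper likewise glues them by $h_\epsilon=(1-\eta)h_\epsilon^++\eta h_\epsilon^-$ with a non-decreasing cutoff $\eta$ supported in the middle interval (so $\theta=1-\eta$). The only differences are cosmetic — you impose the extra symmetry $\theta(-t)=1-\theta(t)$ (unnecessary but harmless) and you spell out the monotonicity and $C^k$ estimates that the paper leaves as routine verification.
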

\begin{proof}
Let
$$
h_\epsilon^+:\left(-\frac{1}{n\epsilon},+\infty\right),\quad t\mapsto (n-1)\epsilon\coth\left(\frac{n\epsilon t+1}{2}\right)
$$
and
$$
h_\epsilon^-:\left(-\infty,\frac{1}{n\epsilon}\right),\quad t\mapsto-(n-1)\epsilon\coth\left(\frac{-n\epsilon t+1}{2}\right).
$$
Through a straightforward calculation we see that $h_\epsilon^+$ and $h_\epsilon^-$ are solutions to the equation
$$
\frac{n}{n-1}h^2+2h'=n(n-1)\epsilon^2.
$$
We now glue $h_\epsilon^+$ and $h_\epsilon^-$ to obtain the desired function $h_\epsilon$. Fix a nonnegative smooth function $\bar\eta$ with compact support contained in $(-\frac{1}{2n},\frac{1}{2n})$ and define
$$
\eta(t)=\left(\int_{-\infty}^{+\infty}\bar\eta(s)\,\mathrm ds\right)^{-1}\int_{-\infty}^t\bar\eta(s)\,\mathrm ds.
$$
Clearly, $\eta$ is smooth with $\eta'\geq 0$ and $0\leq\eta\leq 1$. Furthermore, it satisfies $\eta\equiv 0$ on $(-\infty,-\frac{1}{2n}]$ and $\eta\equiv 1$ on $[\frac{1}{2n},+\infty)$. Denote
$$
h_\epsilon=(1-\eta)h_\epsilon^++\eta h_\epsilon^-.
$$
The proof is now completed by verifying listed properties one by one.
\end{proof}

For convenience we denote
\begin{equation*}
\mathcal A^\epsilon(\Omega)=\mathcal H^{n-1}(\partial^*\Omega)-\int_{ M}(\chi_\Omega-\chi_{\Omega_0})h_\epsilon\circ\phi\,\mathrm d\mathcal H^n,
\end{equation*}
and
$$
\mathcal C_\epsilon=\left\{\text{Caccipoli set}\,\Omega\subset M:\Omega\Delta\Omega_0\Subset\phi^{-1}\left(- \frac{1}{n\epsilon},\frac{1}{n\epsilon}\right)\right\}.
$$

Combining Lemma \ref{Lem: existence of mu bubble} and the Sard's theorem we conclude that
\begin{proposition}\label{Prop: existence}
For almost every $\epsilon\in(0,1)$, there is a smooth minimizer $\hat\Omega_{\epsilon}$ in $\mathcal C_{\epsilon}$ for the functional $\mathcal A^{\epsilon}$.
\end{proposition}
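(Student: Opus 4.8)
The plan is to deduce this from Lemma \ref{Lem: existence of mu bubble} applied with $T=\frac{1}{n\epsilon}$ and $h=h_\epsilon$, after checking that the hypotheses of that lemma hold for a full-measure set of parameters $\epsilon\in(0,1)$. There are exactly two conditions to verify: that $h_\epsilon$ is a smooth function on $\left(-\frac{1}{n\epsilon},\frac{1}{n\epsilon}\right)$ satisfying the blow-up condition \eqref{Eq: h blow up}, and that $\pm\frac{1}{n\epsilon}$ are regular values of $\phi$.

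The first condition is free. Indeed, $h_\epsilon=(1-\eta)h_\epsilon^++\eta h_\epsilon^-$ is smooth wherever $h_\epsilon^+$ and $h_\epsilon^-$ are, and the only singularities of $\coth$ occur where its argument vanishes; for $h_\epsilon^+$ this happens precisely at $t=-\frac{1}{n\epsilon}$ and for $h_\epsilon^-$ precisely at $t=\frac{1}{n\epsilon}$, so on the open interval $\left(-\frac{1}{n\epsilon},\frac{1}{n\epsilon}\right)$ the function $h_\epsilon$ is smooth. Moreover part (2) of Lemma \ref{Lem: function h epsilon} gives exactly $\lim_{t\to-\frac{1}{n\epsilon}}h_\epsilon(t)=+\infty$ and $\lim_{t\to\frac{1}{n\epsilon}}h_\epsilon(t)=-\infty$, which is \eqref{Eq: h blow up} with $T=\frac{1}{n\epsilon}$, valid for every $\epsilon\in(0,1)$.

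It remains to handle the regular-value condition, and this is where Sard's theorem enters. By Lemma \ref{Lem: function phi}, $\phi$ is a smooth proper function on $M$, so Sard's theorem ensures that its set of critical values $\mathcal S\subset\mathbf R$ has Lebesgue measure zero. The map $\epsilon\mapsto\frac{1}{n\epsilon}$ is a diffeomorphism from $(0,1)$ onto $\left(\frac1n,+\infty\right)$, hence it and its inverse carry Lebesgue null sets to Lebesgue null sets; the same holds for $\epsilon\mapsto-\frac{1}{n\epsilon}$. Consequently the set
$$
E=\left\{\epsilon\in(0,1):\ \frac{1}{n\epsilon}\in\mathcal S\ \text{ or }\ -\frac{1}{n\epsilon}\in\mathcal S\right\}
$$
has Lebesgue measure zero. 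For every $\epsilon\in(0,1)\setminus E$, both $\pm\frac{1}{n\epsilon}$ are regular values of $\phi$ and $h_\epsilon$ satisfies \eqref{Eq: h blow up}, so Lemma \ref{Lem: existence of mu bubble} produces a smooth minimizer $\hat\Omega_\epsilon$ in $\mathcal C_{1/(n\epsilon)}=\mathcal C_\epsilon$ for $\mathcal A^{h_\epsilon}=\mathcal A^\epsilon$, which is the claim.

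There is no real obstacle here; the argument is essentially bookkeeping on top of Lemma \ref{Lem: existence of mu bubble} and Lemma \ref{Lem: function h epsilon}. The only point requiring a moment's care is the legitimacy of passing from "almost every $T$" to "almost every $\epsilon$", and this is immediate because $\epsilon\mapsto1/(n\epsilon)$ is a diffeomorphism on the relevant interval and hence preserves the class of measure-zero sets; one must simply remember to also discard the reflected exceptional set coming from the left endpoint $-\frac{1}{n\epsilon}$.
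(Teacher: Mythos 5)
Your proposal is correct and coincides with what the paper intends: the paper's proof is exactly the one-line remark ``Combining Lemma \ref{Lem: existence of mu bubble} and the Sard's theorem,'' and you have simply spelled out the bookkeeping (taking $T=\frac{1}{n\epsilon}$, noting that the blow-up condition in \eqref{Eq: h blow up} holds for every $\epsilon$ by Lemma \ref{Lem: function h epsilon}(2), and using Sard's theorem together with the fact that $\epsilon\mapsto\pm\frac{1}{n\epsilon}$ is a diffeomorphism preserving null sets to discard the exceptional $\epsilon$). Nothing is missing.
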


\section{Ricci curvature estimates}
The goal of this section is to prove several Ricci curvature estimates for limit manifolds under some convergence procedure. In the following, we will denote $B_{g}(p,r)$ to be the geodesic ball of radius $r$ centered at a point $p$ with respect to any given metric $g$.
First let us recall the following definition.
\begin{definition}\label{Defn: smooth convergence}
For a sequence of pointed complete Riemannian manifolds $( M_k, g_k, p_k)$, we say that it converges to some pointed complete Riemannian manifold $( M, g, p)$ in the sense of pointed smooth topology if for any $r>0$ there exist an open region $\Omega\subset  M$ containing $B_{ g}( p,r)$ and smooth maps $\Phi_k:\Omega\to M_k$ with $\Phi_k(p)=p_k$ so that
\begin{itemize}
\item $\Phi_k:\Omega\to \Phi_k(\Omega)$ is a diffeomorphism for all $k$ sufficiently large;
\item $\Phi_k^*(g_k)$ converges to $g$ smoothly in any coordinate of $\Omega$.
\end{itemize}
\end{definition}

The idea for the proposition below comes from Kazdan's work in \cite{Kazdan82}.
\begin{proposition}\label{Prop: Ricci flat limit}
For $3\leq n\leq 7$, let $( M_k, g_k)$ be a sequence of connected orientable closed Riemannian $n$-manifolds such that
\begin{itemize}
\item $ M_k$ admits a smooth map $ f_k: M_k\to T^n$ with nonzero degree;
\item With some point $ p_k\in M_k$ the pointed sequence $( M_k,g_k, p_k)$ converges to some pointed complete Riemannian manifold $( M, g, p)$ in the sense of pointed smooth topology;
\item $ R( g_k)\geq -\delta_k\to 0$ as $k\to\infty$ and there is a positive constant $r_0$ such that $R( g_k)$ is nonnegative outside $B_{ g_k}( p_k,r_0)$.
\end{itemize}
Then the limit $( M, g)$ is Ricci flat.
\end{proposition}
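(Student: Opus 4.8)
The plan is a Kazdan-style argument: assuming $(M,g)$ is not Ricci flat, I would produce a metric of positive scalar curvature on some $M_k$, contradicting the theorem of Schoen--Yau (valid for $3\le n\le 7$) that a closed orientable $n$-manifold admitting a nonzero-degree map to $T^n$ carries no such metric. Write $L_h=-\tfrac{4(n-1)}{n-2}\Delta_h+R(h)$ for the conformal Laplacian and $\lambda_1^k:=\lambda_1(L_{g_k})$; since $M_k$ has no PSC metric, $\lambda_1^k\le 0$ for all $k$.

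\emph{Step 1: the limit is scalar flat.} Passing to the limit gives $R(g)\ge 0$. Because the negative part of $R(g_k)$ is supported in the fixed set $B_{g_k}(p_k,r_0)$ with supremum $\le\delta_k$, testing $L_{g_k}$ against any function shows $\lambda_1^k\ge-\delta_k$; normalising the positive first eigenfunction $u_k$ by $\int_{B_{g_k}(p_k,r_0)}u_k^2=1$ then forces $\int_{M_k}|\nabla u_k|^2\to 0$, and local uniform geometry near $p$ together with Harnack and Schauder estimates gives $\sup_{M_k}u_k\le C$. At an interior maximum of $u_k$ one has $\Delta_{g_k}u_k\le 0$, hence $R(g_k)\le\lambda_1^k\le 0$ there; since $R(g_k)\ge 0$ off $B_{g_k}(p_k,r_0)$, the maximum of $u_k$ lies in $\overline{B_{g_k}(p_k,r_0)}$. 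A subsequence of the $u_k$ therefore converges in $C^\infty_{\mathrm{loc}}(M)$ to a bounded positive $u_\infty$ with $L_gu_\infty=0$ attaining its supremum on $\overline{B_g(p,r_0)}$; as $R(g)\ge 0$, the strong maximum principle forces $u_\infty\equiv\mathrm{const}$, so $R(g)\equiv 0$. (This is the step that directly follows Kazdan's idea.)

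\emph{Step 2: the limit is Ricci flat.} Suppose not, say $\Ric_g\ne0$ at $q$. Fix a radius $L$ large enough that $B_g(p,r_0)$ and a small ball $B_g(q,\rho)$ (with $|\Ric_{g_k}|^2\ge c_0>0$ on $\Phi_k(B_g(q,\rho))$ for $k$ large) lie well inside $B_g(p,L)$, take a cut-off $\psi_k$ equal to $1$ on $\Phi_k(B_g(q,\rho))$ and supported in $B_{g_k}(p_k,L)$, transitioning over a unit annulus, and set $g_k^s:=g_k-s\,\psi_k\,\Ric_{g_k}$, a metric for small $s$ agreeing with $g_k$ outside $B_{g_k}(p_k,L)$. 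I would show $\lambda_1(L_{g_k^s})>0$ for suitable fixed $s$ and $k$ large, i.e.\ $Q_s(u):=\int\big(\tfrac{4(n-1)}{n-2}|\nabla u|^2_{g_k^s}+R(g_k^s)u^2\big)\,dV_{g_k^s}>0$ for all $u\not\equiv 0$ (which gives a conformal deformation of $g_k^s$ to a PSC metric on $M_k$). The first-order term in $s$, after integration by parts, yields a gain $\gtrsim s\,c_0\int_{\Phi_k(B_g(q,\rho))}u^2$ — tracking the eigenvalue rather than pointwise scalar curvature is what renders the $\tfrac12\Delta R(g_k)$ term harmless, and $R(g_k)\to 0$ in $C^\infty$ on $B_{g_k}(p_k,L)$ (by Step 1) kills the remaining potential-type errors — against errors supported in $B_{g_k}(p_k,L)$ controlled by $s\int_{B_{g_k}(p_k,L)}|\nabla u|^2$, by $s\,o(1)\int_{B_{g_k}(p_k,L)}u^2$ and by $O(s^2)$. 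Using $Q_0(u)\ge\tfrac{4(n-1)}{n-2}\int_{M_k}|\nabla u|^2-\delta_k\int_{B_{g_k}(p_k,r_0)}u^2$, absorbing the gradient errors, and normalising $\int_{B_{g_k}(p_k,L)}u^2=1$, one obtains
$$Q_s(u)\ \ge\ c\int_{M_k}|\nabla u|^2+s\,c_0'\int_{\Phi_k(B_g(q,\rho))}u^2-\Theta_{k,s},\qquad \Theta_{k,s}=\delta_k+s\,o(1)+C_L\,s^2.$$
If $Q_s(u)\le 0$ then $\int_{M_k}|\nabla u|^2\le C\Theta_{k,s}$, so the (uniform-in-$k$) Poincaré inequality on $B_{g_k}(p_k,L)$ makes $u$ close in $L^2$ to its mean there; hence $\int_{\Phi_k(B_g(q,\rho))}u^2$ is at least a fixed positive constant minus $O(\Theta_{k,s})$, and combined with $\int_{\Phi_k(B_g(q,\rho))}u^2\le C\Theta_{k,s}/s$ this forces $\Theta_{k,s}\gtrsim s$. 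But $\Theta_{k,s}=\delta_k+s\,o(1)+C_Ls^2<c\,s$ once $s$ is fixed small and $k$ is large, a contradiction; thus $(M,g)$ is Ricci flat.

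The technical heart is Step 2, and the crucial point is the order of the quantifiers: first fix the radius $L$ (dictated by where $\Ric_g\ne 0$), then fix $s$ small (depending on $L$ through the cut-off loss, the $O(s^2)$ term and the Poincaré constant), and only then let $k\to\infty$ to absorb $\delta_k$ and the $o(1)$'s coming from $R(g_k)\to 0$. The delicate balance is that the cut-off $\psi_k$ creates a loss of order $s$ in $R(g_k^s)$ through $\langle\Hess\psi_k,\Ric_{g_k}\rangle$, which must be overcome by the $|\Ric_{g_k}|^2$-gain; keeping $L$ finite (so all geometric constants are fixed) and working with $\lambda_1(L_{g_k^s})$ instead of pointwise scalar curvature are what close it. The hypotheses enter decisively: ``$R(g_k)\ge 0$ outside a fixed ball'' localises the negativity and pins the maximum of $u_k$ into a fixed set, while pointed smooth convergence supplies the uniform local geometry, the convergence $R(g_k)\to 0$, and the $C^\infty_{\mathrm{loc}}$ convergence of the eigenfunctions.
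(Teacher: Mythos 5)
Your proposal follows the same overall strategy as the paper: a Kazdan-style argument in which non-vanishing of $\Ric_g$ is shown to produce, via deformation and the conformal Laplacian, a positive-scalar-curvature conformal class on some $M_k$, contradicting the Schoen--Yau obstruction for manifolds mapping onto $T^n$ with nonzero degree. The implementation, however, differs noticeably from the paper's. The paper's key device is the \emph{first Neumann eigenvalue of the conformal Laplacian on a fixed ball $B_g(p,r)$}: if this is positive (which it is once $R(g)>0$ at some $q\in B_g(p,r)$, or after a Kazdan deformation $g\mapsto g+\tau\eta\Ric(g)$ when $\Ric_g\ne 0$ at $q$), smooth convergence transfers positivity to $B_{g_k}(p_k,r)$, and then the hypothesis $R(g_k)\ge 0$ outside $B_{g_k}(p_k,r_0)\subset B_{g_k}(p_k,r)$ immediately forces $\lambda_1(L_{g_k})>0$ globally -- no eigenfunction analysis needed. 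You instead work with the \emph{global} first eigenfunctions $u_k$ and explicit Rayleigh-quotient estimates, which is morally equivalent but technically heavier, and has one genuine soft spot: the step ``at an interior maximum, $R(g_k)\le\lambda_1^k\le 0$, hence the maximum lies in $\overline{B_{g_k}(p_k,r_0)}$'' does not exclude a maximum at a point outside the ball where $R(g_k)=0$ and $\lambda_1^k=0$; you would need a separate strong-maximum-principle/unique-continuation argument to finish that case (after which $u_k$ is constant, $R(g_k)\equiv\lambda_1^k$, and the conclusion still holds, but this must be said). Your Step~2 is essentially a from-scratch reconstruction of Kazdan's Lemma~3.3 (which the paper simply cites), with the important and correct observation that Step~1 makes $R(g_k)\to 0$ in $C^\infty_{\mathrm{loc}}$, which is what tames the $\tfrac12\Delta R$ term in the first variation of scalar curvature; the quantifier ordering (fix $L$, then $s$, then $k\to\infty$) is also right. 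In short: correct strategy, a gap to patch in pinning the maximum of $u_k$, and you would get a substantially shorter proof by switching to the Neumann eigenvalue on a ball rather than controlling global eigenfunctions.
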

\begin{proof}
First let us show that the scalar curvature $R(g)$ vanishes everywhere. Notice that the scalar curvature $R(g)$ is nonnegative from our assumption, we show that it is identical to zero. Otherwise $R( g)$ must be positive at some point $q$ in $ M$. Fix a positive constant $r>r_0$ such that $ q$ is contained in $B_{ g}( p,r)$. For smooth metric $g$ we denote $\Delta_{c,g}$ to be the conformal Laplace operator with respect to $g$. It is clear that the first Neumann eigenvalue of $\Delta_{c,g}$ on $B_g(p,r)$ given by
$$
\mu_1(B_{ g}( p,r), \Delta_{c,g})=\inf_{\psi\in C^\infty-\{0\}}\frac{\int_{B_{ g}( p,r)}|\nabla_{ g}\psi|^2+c_nR( g)\psi^2\,\mathrm d\mu_{ g}}{\int_{B_{ g}( p,r)}\psi^2\,\mathrm d\mu_{ g}}
$$
is positive, where $c_n=\frac{n-2}{4(n-1)}$. From the pointed smooth convergence of $(M_k,g_k,p_k)$, we see that the first Neumann eigenvalue $\mu_1(B_{g_k}(p_k,r), \Delta_{c,g_k})$ is also positive when $k$ is large enough. Combined with the nonnegativity of scalar curvature $R(g_k)$ outside ball $B_{g_k}(p_k,r_0)$, we conclude that $( M_k, g_k)$ must be Yamabe-positive, which is impossible from the work in \cite{SY1979} since each $ M_k$ admits a smooth map to $T^n$ with nonzero degree.

Next we prove $\Ric( g)\equiv 0$. Otherwise $\Ric( g)$ is nonzero at a point $q$. We pick up $r$ in the same way as above. According to \cite[Lemma 3.3]{Kazdan82} we can find a small positive constant $\tau$ and a nonnegative cut-off function $\eta$ supported around point $q$ such that the first Neumann eigenvalue $\mu_1(B_{{ g}}( p,r), \Delta_{c,g'})$ is positive with $ g'= g+\tau\eta\Ric( g)$. By definition there are an open region $\Omega\supset B_{g}(p,r)$ and diffeomorphisms $\Phi_k:\Omega\to \Omega_k\subset M_k$ for $k$ large enough. Define
$$
g_k'=g_k+\tau(\eta\circ\Phi_k^{-1})\Ric(g_k).
$$
From the pointed smooth convergence it follows that the first Neumann eigenvalue $\mu_1(B_{ g_k}( p_k,r), \Delta_{c,g_k'})$ is positive for $k$ large enough, which leads to a contradiction just as before.
\end{proof}

In the following, we prove more Ricci curvature estimates similar to the above one, which will be used in the proof for Theorem \ref{Thm: main3}. Before that, let us make some necessary preparations.

Let $(M,g)$ be a closed orientable Riemannian manifold, which admits a smooth map $f:M\to \mathbf S^2\times T^{n-2}$ with nonzero degree. Denote
$$\pi:\mathbf S^2\times T^{n-2}\to \mathbf S^2$$
to be the canonical projection. We introduce the {\it partial spherical $2$-systole} with respect to map $f$ as following
\begin{equation*}
 \sys'_2( M, g,f):=\inf\left\{\area(\mathbf S^2,i^* g)\left|\begin{array}{c}
 \text{$i:\mathbf S^2\to M$ smooth such that}\\
 \text{$(\pi\circ f\circ i)_*([\mathbf S^2])\neq 0$}
 \end{array}\right.\right\}.
\end{equation*}
From this definition it is clear that $\sys_2'(M,g,f)$ is no less than $\sys_2(M,g)$.

Recall that the author proved the following result in \cite{Z2020}:
\begin{theorem}\label{Thm: 2 systole n manifold}
For $3\leq n\leq 7$, let $(M^n,g)$ be a connected orientable closed Riemannian manifold with positive scalar curvature, which admits a smooth map $f:M\to \mathbf S^2\times T^{n-2}$ with nonzero degree. Then there exists an embedded sphere $\Sigma$ such that $(\pi\circ f)_*([\Sigma])\neq 0$ and
\begin{equation}\label{Eq: 2 systole n manifold}
\inf_M R(g)\cdot\sys_2'(M,g,f)\leq 8\pi.
\end{equation}
The equality holds if and only if the universal covering of $(M,g)$ is isometric to the standard Riemannian product $\mathbf S^2\times \mathbf R^{n-2}$ up to scaling.
\end{theorem}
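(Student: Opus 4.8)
The plan is to peel the manifold down to a three–dimensional slice by a Schoen--Yau type dimension reduction along the torus directions, run a Bray--Brendle--Neves stability computation for an area–minimizing sphere at the bottom dimension, and finally treat the equality case by a foliation/splitting argument. An alternative to the first step would be to work directly with Gromov $\mu$–bubbles (the $h$–minimizing boundaries of Section~\ref{Sec: 2}) fibered over $\mathbf S^2$, but the classical descent is the most transparent route here.

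For the descent I would start from $f\colon M^n\to\mathbf S^2\times T^{n-2}$: it pulls back the $n-2$ standard generators of $H^1(T^{n-2};\mathbf Z)$ to classes $\alpha_1,\dots,\alpha_{n-2}\in H^1(M;\mathbf Z)$, and nonvanishing of the degree forces $\alpha_1$ to be Poincar\'e dual to a nonzero class in $H_{n-1}(M;\mathbf Z)$ on which the remaining data restricts with nonzero ``degree''. Minimizing area in that homology class is legitimate since $n-1\le 6$, so the minimizer is a smooth, embedded, two–sided minimal hypersurface $\Sigma_1^{n-1}\subset M$, and $f|_{\Sigma_1}$ is homotopic to a nonzero degree map $\Sigma_1\to\mathbf S^2\times T^{n-3}$. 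Iterating $n-3$ times yields a smooth embedded $\Sigma^3\hookrightarrow M$ together with a nonzero degree map $\Sigma^3\to\mathbf S^2\times\mathbf S^1$; along the way the first eigenfunction of the stability operator should be recorded at each step and folded into a positive weight $\varphi$ on $\Sigma^3$. The purpose of the weight is curvature bookkeeping: combining the Gauss equation with the stability inequality at each stage, the weighted (Bakry--\'Emery, equivalently conformally modified) scalar curvature of $(\Sigma^3,\varphi)$ should stay bounded below by $\inf_M R(g)$ — choosing the conformal/weight exponents so that this constant survives each reduction without loss is the most delicate part of the argument.

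At the bottom dimension the nonzero degree map $\Sigma^3\to\mathbf S^2$ (post--composing with $\pi$) produces a nontrivial class in $\pi_2(\Sigma^3)$; minimizing (weighted) area among spheres in that class, Sacks--Uhlenbeck gives a branched minimal two–sphere, which by Meeks--Yau is in fact embedded since it maps nontrivially to $\mathbf S^2$. This yields an embedded sphere $S\subset\Sigma^3\subset M$ with $(\pi\circ f)_*[S]\ne 0$ and $\area(S,g)\ge\sys_2'(M,g,f)$. Testing the weighted second variation of area at $S$ with an appropriate power of $\varphi$ and substituting the Gauss equation turns stability into an inequality of the shape
\[
\tfrac12\inf_M R(g)\cdot\area(S,g)\ \le\ \int_S\Big(\tfrac12 R(g)+\tfrac12|A|^2+(\text{weight terms})\Big)\ \le\ \int_S K_S\ =\ 2\pi\chi(S)\ =\ 4\pi,
\]
the weight terms being nonnegative by the choice above; this gives \eqref{Eq: 2 systole n manifold}.

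For the rigidity, equality forces each intermediate hypersurface to be totally geodesic with $\Ric(\nu,\nu)=0$, every weight to be constant, $R(g)\equiv\inf_M R(g)$ along all the slices, and $S$ to be a round two–sphere of area $8\pi/\inf_M R(g)$. A stable totally geodesic minimal hypersurface realizing equality splits a two–sided neighborhood isometrically as a Riemannian product, and the Bray--Brendle--Neves foliation scheme — sweeping out by minimal spheres of constant area and showing the set of points swept is open and closed — upgrades this to a global splitting off an $\mathbf R$–factor. Applying this at every level of the descent, with the base case $n=3$ being precisely the Bray--Brendle--Neves theorem, one concludes that the universal cover of $(M,g)$ is the standard product $\mathbf S^2\times\mathbf R^{n-2}$ up to scaling. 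I expect the main obstacle to be exactly the propagation of the sharp constant $8\pi$ through the dimension reduction: the ordinary conformal change in the Schoen--Yau descent only preserves positivity of the scalar curvature, so one must instead use weighted minimal slices whose weighted scalar curvature retains the precise bound $\inf_M R(g)$; a secondary difficulty is ruling out branch points of the minimal sphere in the equality case so that the foliation argument can be launched.
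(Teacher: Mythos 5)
The paper itself does not prove this theorem; it is imported verbatim from the author's earlier work \cite{Z2020}, and the only new content here is the observation (Remark~3.4) that the same proof gives the stated bound for the partial systole $\sys_2'$ rather than $\sys_2$. Your blind proposal therefore has to be compared against \cite{Z2020} rather than against anything in this manuscript. At the level of architecture your outline is the correct one and coincides with the strategy there: a Schoen--Yau style torus descent using stable minimal hypersurfaces, a Bray--Brendle--Neves stability computation at the bottom dimension, and a foliation argument for rigidity. You have also correctly identified the crux, namely how to propagate the sharp constant $8\pi$ through the descent, since an ordinary conformal Schoen--Yau reduction only preserves the \emph{sign} of the Yamabe invariant and not a quantitative scalar curvature bound.

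Where your sketch stays at the level of intention and the reference actually has content is in resolving exactly that crux. The mechanism used in \cite{Z2020}, and which reappears in this paper in the proofs of Theorems~\ref{Thm: main1} and~\ref{Thm: main3} as well as in Propositions~\ref{Prop: scalar 2 limit} and~\ref{Prop: vanish normal ricci limit}, is the warped product with a circle fiber: if $\Sigma$ is a two-sided stable minimal hypersurface with first eigenfunction $u$, one passes to $(\Sigma\times\mathbf S^1,\ \hat g+u^2\,\mathrm ds^2)$, whose scalar curvature is $R(\hat g)-2u^{-1}\hat\Delta u\ge R(g)+|\mathring A|^2$ by the Gauss equation and stability. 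This is not merely ``equivalent to'' a Bakry--\'Emery weight in the heuristic sense you suggest; it is the specific form that makes the numerology close up, because the quantity one controls is a genuine scalar curvature on an $n$-manifold, not a modified curvature whose relation to the area of the minimal sphere you would still have to argue. In particular the weighted inequality you write down at the end,
\[
\tfrac12\inf_M R(g)\cdot\area(S)\le\int_S K_S=4\pi,
\]
does not come for free from ``choosing the weight exponents well''; it comes from applying Theorem~\ref{Thm: 2 systole n manifold} (in its proved form) or the $n=3$ BBN estimate to the warped product $\Sigma\times T^{n-3}\times\mathbf S^1$ directly, where there is no loss at all. Likewise in the rigidity step, your phrase ``a stable totally geodesic hypersurface realizing equality splits a neighborhood isometrically'' is not itself a theorem: it is the conclusion of the BBN-style foliation lemma, which requires one to show the first stability eigenvalue vanishes, build the foliation by constant-area minimal leaves, and verify the lapse is constant. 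So the proposal is an accurate high-level description of the correct route, but the two points you flag as ``delicate'' are precisely the ones that carry the load, and a referee would not accept the sketch as written without the warped product bookkeeping and the foliation lemma spelled out.
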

\begin{remark}
Despite of the slightly different statement from the original one in \cite{Z2020}, it can be quickly deduced from the exactly same proof.
\end{remark}
\begin{remark}\label{Remark: general regularity}
From the general regularity result in \cite{NS1993} by Nathan Smile, the estimate \eqref{Eq: 2 systole n manifold} also holds when dimension $n=8$.
\end{remark}

In the same spirit of Proposition \ref{Prop: Ricci flat limit} we establish the following result:
\begin{proposition}\label{Prop: scalar 2 limit}
For $3\leq n\leq 7$, let $( M_k, g_k)$ be a sequence of orientable connected closed Riemannian $n$-manifolds such that
\begin{itemize}
\item $M_k$ admits a smooth map $f_k:M_k\to \mathbf S^2\times T^{n-2}$ with nonzero degree;
\item For all $k$ there holds $\sys'_2( M_k, g_k,f_k)\geq 4\pi$;
\item With some $ p_k\in M_k$ the pointed sequence $( M_k,g_k, p_k)$ converges to some complete Riemannian manifold $( M, g, p)$ in the sense of smooth topology;
\item $ R( g_k)\geq \delta_k\to 2$ as $k\to\infty$ and there is a positive constant $r_0$ such that $R( g_k)\geq 2$ outside $B_{ g_k}( p_k,r_0)$.
\end{itemize}
Then the limit $(M,g)$ has constant scalar curvature $2$ and nonnegative Ricci curvature.
\end{proposition}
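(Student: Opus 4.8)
The plan is to mimic the strategy of Proposition \ref{Prop: Ricci flat limit}, replacing the role of ``Yamabe-positivity is obstructed by a nonzero degree map to $T^n$'' with ``the sharp $2$-systole estimate in Theorem \ref{Thm: 2 systole n manifold} is obstructed by a nonzero degree map to $\mathbf S^2\times T^{n-2}$ together with the lower systole bound $\sys_2'\geq 4\pi$''. First I would establish that the limit $(M,g)$ has scalar curvature identically equal to $2$. From the hypothesis $R(g_k)\geq \delta_k\to 2$ and smooth convergence on compact sets we immediately get $R(g)\geq 2$ pointwise, so the only thing to rule out is $R(g)(q)>2$ at some point $q$. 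Suppose this happens; fix $r>r_0$ with $q\in B_g(p,r)$. The idea is to produce, for $k$ large, a metric $\tilde g_k$ on $M_k$ which is a small perturbation of $g_k$ supported in a tiny ball around $\Phi_k(q)$, such that $R(\tilde g_k)\geq 2$ everywhere and $R(\tilde g_k)>2$ somewhere, while $\sys_2'(M_k,\tilde g_k, f_k)$ stays $\geq 4\pi - o(1)$ (in fact we can arrange it to stay $\geq 4\pi$ by a small rescaling, see below). Then $\inf_{M_k} R(\tilde g_k)\cdot \sys_2'(M_k,\tilde g_k,f_k) \geq 2\cdot 4\pi = 8\pi$, and since we have strictly increased the scalar curvature somewhere the rigidity case of Theorem \ref{Thm: 2 systole n manifold} forces a strict inequality $>8\pi$ — wait, more carefully: Theorem \ref{Thm: 2 systole n manifold} says the product $\inf R \cdot \sys_2'\leq 8\pi$, with equality iff the universal cover splits as $\mathbf S^2\times \mathbf R^{n-2}$; but on the split model $R\equiv 2$, so a metric with $R>2$ somewhere and $R\geq 2$ everywhere has $\inf R = 2$ and cannot be the rigid model, hence must satisfy $\inf R\cdot \sys_2' < 8\pi$, i.e. $\sys_2'(M_k,\tilde g_k,f_k) < 4\pi$ — contradicting our construction. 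This is the mechanism.

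The construction of the local perturbation is the technical heart. I would do this in the style of Kazdan's deformation lemma: near $q$, since $R(g)(q)>2$, a sufficiently small metric perturbation $\tilde g_k = g_k + \tau \chi_k$ (with $\chi_k$ a fixed symmetric $2$-tensor transplanted via $\Phi_k$ and supported in a small ball, $\tau>0$ small) keeps $R(\tilde g_k) > 2 - \epsilon_k$ near $\Phi_k(q)$ with $\epsilon_k\to 0$, and leaves $g_k$ unchanged outside, so $R(\tilde g_k)\geq \delta_k - C\tau$ globally on $B_{g_k}(p_k,r_0)$ and $\geq 2$ outside. To fix up the slight deficit $\delta_k - C\tau$ possibly below $2$, I would follow the exact device used in Proposition \ref{Prop: Ricci flat limit}: rather than perturbing, one should instead run the argument with the conformal Laplacian / first Neumann eigenvalue of a shifted operator. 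Concretely, the cleanest route is: the operator $\Delta_{c,g} + c_n(R(g)-2)$ has positive first Neumann eigenvalue on $B_g(p,r)$ when $R(g)>2$ somewhere on $B_g(p,r)$ and $R(g)\geq 2$ throughout — no, that functional is only nonnegative. The honest fix is to use Kazdan's Lemma 3.3 to find $\tau>0$ and a cutoff $\eta$ supported near $q$ so that $\mu_1\big(B_g(p,r), \Delta_{c,g'} + c_n(2 - R(g'))\big)>0$ for $g' = g + \tau\eta(R(g)-2)g$ (or a similar deformation increasing $R$ at $q$ to first order), exploiting that $R(g)(q)>2$ gives room. Transplanting $g'$ by $\Phi_k$ to get $g_k'$ on $\Omega_k$, gluing back to $g_k$ outside, and using smooth convergence, the quantity $\mu_1(B_{g_k}(p_k,r), \Delta_{c,g_k'} + c_n(2-R(g_k')))$ stays positive for large $k$; combined with $R(g_k')\geq 2$ outside the ball (where $g_k'=g_k$), a standard conformal-change argument (solving $\Delta_{c,g_k'} u + c_n(2-R(g_k'))u = 0$ with $u>0$, then $\hat g_k = u^{4/(n-2)} g_k'$) produces a metric with $R(\hat g_k)\geq 2$ everywhere and $R(\hat g_k)>2$ on an open set. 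Crucially the conformal factor $u\to 1$ as $k\to\infty$ on the relevant compact set, so $\sys_2'(M_k,\hat g_k,f_k) \geq (1-o(1))\sys_2'(M_k,g_k,f_k)\geq (1-o(1))4\pi$; absorbing the $o(1)$ by also scaling $\hat g_k$ up by $(1+o(1))$ (which only helps $R\geq 2$ to become $R\geq 2(1-o(1))$ — so one must be slightly careful and instead scale to make $\inf R = 2$ exactly) we land in the equality-forcing regime of Theorem \ref{Thm: 2 systole n manifold} with a non-rigid metric, the desired contradiction. Hence $R(g)\equiv 2$.

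Given $R(g)\equiv 2$, the proof of $\Ric(g)\geq 0$ follows the identical template applied to the traceless Ricci tensor, again after Kazdan: if $\Ric(g)$ had a negative eigenvalue at some $q$, deform $g$ near $q$ along $-\eta\,\mathrm{(bad\ direction)}$ so that the deformed metric $g'$ has $R(g')>2$ near $q$ while still $\geq 2$ elsewhere — this is possible precisely because the first-order change of $R$ under $g\mapsto g+ h$ is $-\Delta(\tr h) + \Div\Div h - \langle \Ric, h\rangle$, and one can choose $h$ supported near $q$ making this positive when $\Ric(g)$ fails to be $\geq 0$ there (this is exactly Kazdan's Lemma 3.3 content) — and then rerun the scalar-curvature argument above to get a contradiction with the rigidity statement. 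Therefore $\Ric(g)\geq 0$ and $R(g)\equiv 2$, as claimed. The main obstacle I anticipate is bookkeeping the constants in the gluing/conformal step so that the limiting metric genuinely achieves $\inf R = 2$ (not merely $\geq 2 - o(1)$) while keeping $\sys_2'\geq 4\pi$ intact — i.e. arranging that the product equals $8\pi$ exactly on the non-rigid perturbed manifold — since Theorem \ref{Thm: 2 systole n manifold} only yields a contradiction at the borderline equality, not from a strict inequality; this is handled by a careful simultaneous choice of the perturbation size $\tau$ and a compensating global rescaling, using that both $R$ and $\sys_2'$ vary continuously and controllably under these operations.
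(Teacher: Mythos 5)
Your high-level strategy is right (use Theorem \ref{Thm: 2 systole n manifold} as the obstruction, with a Kazdan-style first-order perturbation near a ``bad'' point), but you are missing the device that actually makes the paper's argument close: the warped-product extension $\tilde M_k = M_k \times \mathbf S^1$, $\tilde g_k = g_k + v_k^2\,\mathrm ds^2$, using the first eigenfunction $v_k>0$ of the operator $L_g = -\Delta_g + \tfrac{1}{2}(R(g)-2)$. Your proposal instead routes through a conformal change $\hat g_k = u^{4/(n-2)}g_k'$. The warped-product device has two properties that the conformal change does not, and both are essential. First, it produces a metric with \emph{constant} scalar curvature $R(\tilde g_k) = 2 + 2\lambda_{1,k} > 2$ (plug $-\Delta v_k = (\lambda_{1,k} - \tfrac12(R(g_k)-2))v_k$ into $R(\tilde g_k) = R(g_k) - 2\Delta v_k / v_k$), which gives a clean contradiction $\inf R(\tilde g_k)\cdot \sys_2'(\tilde M_k,\tilde g_k,f_k\times\id) = (2+2\lambda_{1,k})\cdot 4\pi > 8\pi$ against the inequality in Theorem \ref{Thm: 2 systole n manifold} — no rigidity case, no $o(1)$ bookkeeping needed. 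Second, it preserves the partial $2$-systole \emph{exactly}: $\sys_2'(\tilde M_k,\tilde g_k,f_k\times\id) = \sys_2'(M_k,g_k,f_k)$, because spheres realizing the infimum project to the base $M_k$. By contrast, a conformal factor $u\neq 1$ on the support of a competitor sphere changes its area, so the inequality $\sys_2'(M_k,\hat g_k,f_k)\geq (1-o(1))\,4\pi$ you invoke is not obviously true (you need the infimum to be witnessed by spheres that stay away from the perturbation region, which is not granted), and the compensating global rescale you gesture at then alters $\inf R$ in the opposite direction. This is exactly the ``bookkeeping'' you flag as an obstacle at the end, and it is a genuine gap, not a routine step.

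For the Ricci estimate you also miss a sign that does real work. In the paper the deformation is $g_t = g - 2t\eta\,h$ with $h = \Ric(V,V)\,\omega\otimes\omega$ and $\Ric(V,V) < 0$ near $q$, so $-2t\eta h \geq 0$ and hence $g_{k,\tau}\geq g_k$ as quadratic forms. This monotonicity is what gives $\sys_2'(M_k,g_{k,\tau},f_k)\geq \sys_2'(M_k,g_k,f_k)\geq 4\pi$ directly, after which the same $\mathbf S^1$-warping yields $R(\tilde g_k)>2$ and the contradiction. Your sketch of this step (``deform $g$ near $q$ along $-\eta\,(\mathrm{bad\ direction})$ so that the deformed metric $g'$ has $R(g')>2$'') does not record this monotonicity and does not explain why the systole bound survives the perturbation; without the monotonicity observation (or the exact warped-product identity above) there is no control on $\sys_2'$ under a non-conformal deformation. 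A smaller point: after the deformation, one applies Theorem \ref{Thm: 2 systole n manifold} to the $(n+1)$-manifold $\tilde M_k$, so for $n=7$ one needs it in dimension $8$; the paper handles this by Remark \ref{Remark: general regularity}. Your writeup does not track the dimension shift because you never introduce the cylinder.
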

The proof is quite similar to that of Proposition \ref{Prop: Ricci flat limit} except that we focus on a new elliptic operator from the following warped product construction. Given a complete Riemannian manifold $(M^n,g)$ and also a smooth positive function $u:M\to \mathbf R$, we can define
\begin{equation*}
\tilde M=M\times \mathbf S^1\quad \text{and}\quad \tilde g=g+u^2\mathrm ds^2,
\end{equation*}
where $\mathbf S^1$ is the unit circle and $s$ is the arc length on it. Through a simple calculation we know that the scalar curvature of this new manifold is
$$
R(\tilde g)=R(g)-\frac{2\Delta_gu}{u}.
$$
The benefit of this construction is that we can increase the scalar curvature without changing the partial spherical $2$-systole. In fact, if the manifold $M$ admits a smooth map $f:M\to \mathbf S^2\times T^{n-2}$ with nonzero degree, it holds
$$
\sys_2'(M,g,f)=\sys_2'(\tilde M,\tilde g,f\times \id).
$$
So this provides us an ideal way to deform the given metric $g$.
Notice that the existence of a smooth positive function $u$ on $M$ such that $R(\tilde g)$ has a positive lower bound is closely related to the positivity of the elliptic operator $$-\Delta_g+\frac{1}{2}R(g).$$
We put our attention on this operator instead of the conformal Laplacian.

For those readers who want to skip the tedious details, we give a quick sketch as follows. The point is that the partial spherical $2$-systole condition on $(M_k,g_k)$ combined with Theorem \ref{Thm: 2 systole n manifold} forbids any possible warped product construction to increase the scalar curvature to be greater than $2$, which leads to the same restriction to $(M,g)$ after we take the limit. Denote
\begin{equation}
L_g:=-\Delta_g+\frac{1}{2}(R(g)-2).
\end{equation}
This means that $L_g$ cannot be a positive operator in any compact region and so the sclar curvature $R(g)$ must be identical to 2 from our assumption $R(g)\geq 2$. If the Ricci curvature of the metric $g$ is negative somewhere, we are able to deform the metric along Ricci direction to obtain the positivity of operator $L_g$ without decreasing the metric. Back to the level of $(M_k,g_k)$, we can find a warped product construction with scalar curvature greater than $2$ but the partial spherical $2$-systole still no less than $4\pi$. This is impossible due to Theorem \ref{Thm: 2 systole n manifold}.

The full details of the proof for Proposition \ref{Prop: scalar 2 limit} is presented below.
\begin{proof}[Froof for Proposition \ref{Prop: scalar 2 limit}]
From our assumption it is clear that the scalar curvature $R( g)$ is no less than $2$ everywhere. Assume otherwise there is a point $q$ such that $R( g)$ is greater than $2$. We take a positive constant $r>r_0$ such that $ q$ is contained in $B_{ g}( p,r)$. It is clear that the first Neumann eigenvalue of $L_g$ on $B_g(p,r)$ defined by
$$
\mu_1(B_{ g}( p,r),L_g)=\inf_{\psi\in C^\infty-\{0\}}\frac{\int_{B_{ g}( p,r)}|\nabla_{ g}\psi|^2+\frac{1}{2}(R( g)-2)\psi^2\,\mathrm d\mu_{ g}}{\int_{B_{ g}( p,R)}\psi^2\,\mathrm d\mu_{ g}}
$$
is positive. From the pointed smooth convergence the first Neumann eigenvalue $\mu_1(B_{ g_k}( p_k,r),L_{g_k})$ is also positive for $k$ large enough. Since $R( g_k)$ is no less than 2 outside $B_{ g_k}( p_k,r_0)$, the first eigenvalue $\lambda_{1,k}$ of $L_{g_k}$ on $M_k$ is positive. Let $v_k$ be the corresponding first eigenfunction, then we have
$$
-\Delta_{ g_k} v_k+\frac{1}{2}(R( g_k)-2) v_k=\lambda_{1,k} v_k.
$$
Define $\tilde M_k=M_k\times \mathbf S^1$ and $\tilde g_k= g_k+ v_k^2\mathrm ds^2$. It follows
$$
R(\tilde g_k)=R( g_k)-\frac{\Delta_{ g_k} v_k}{ v_k}=2+2\lambda_{1,k}>2
$$
and
$$\sys_2'(\tilde M_k,\tilde g_k,f_k\times \id)=\sys_2'( M_k, g_k,f_k)\geq 4\pi.$$
This leads to a contradiction to Theorem \ref{Thm: 2 systole n manifold}.

Next we prove $\Ric(g)\geq 0$. Assume otherwise $\Ric(g)$ is negative along some tangent vector at a point $q$. Then we can find a unit vector $v\in T_qM$ with respect to $g$ such that
$
\Ric_{g}(v)=cv
$ with $c$ to be a negative constant. Extend $v$ to a unit vector field $V$ on some neighborhood of $q$ and denote $\omega$ to be the corresponding dual $1$-form. From continuity $\Ric(V,V)$ takes negative values around $q$. Let $h=\Ric(V,V)\,\omega\otimes\omega$. Clearly we have
$$
\langle h,\Ric(g)\rangle_g(q)=c^2>0.
$$
Therefore we can assume that $\langle h,\Ric(g)\rangle_g$ is positive on a neighborhood of $q$. Take a nonnegative cut-off function $\eta$ supporting on this neighborhood and define
$
g_t=g-2t\eta h.
$
Taking the derivative of the scalar curvature function with respect to $ g_t$, we obtain
$$
\left.\frac{\partial}{\partial t}\right|_{t=0}R( g_t)=-\Delta_{ g}\tr_{ g}h+\Div_{ g}\Div_{ g}h-\langle h,\Ric( g)\rangle_{ g}.
$$
Take $r>r_0$ such that $B_{g}(p,r)$ contains the above neighborhood of $q$.
With a similar calculation as in the proof of \cite[Theorem 1.7]{LM2019} we conclude that
\begin{equation}\label{Eq: derivative newmann eigenvalue}
\left.\frac{\mathrm d}{\mathrm dt}\right|_{t=0}\mu_1(B_{ g}( p,r),L_{ g_t})
=\vol_{ g}(B_{ g}( p,r))^{-1}\int_{ M}\eta\langle h,\Ric(g)\rangle_g\mathrm d\mu_{ g}
>0.
\end{equation}
So we can pack up a positive constant $\tau$ such that $\mu_1(B_{g}(p,r),L_{g_{\tau}})>0$. Let $\Omega\supset B_g (p,r)$ and $\Phi_k:\Omega\to \Phi_k(\Omega)$ be the diffeomorphisms from Definition \ref{Defn: smooth convergence}. Define
$$
g_{k,\tau}=g_k-2\tau(\Phi_k^{-1})^*(\eta h).
$$
For $k$ large enough we have $\mu_1(B_{g_k}(p_k,r),g_{k,t_0})>0$. As before, we can define a new manifold $(\tilde M_k,\tilde g_k)$ with $\tilde M_k= M_k\times \mathbf S^1$ and $\tilde g_k= g_{k,\tau}+v_k^2\mathrm ds^2$ such that $R(\tilde g_k)>2$. However it follows from our construction that $g_{k,\tau}\geq g_k$ as quadratic forms. Therefore
$$
\sys_2'(\tilde M_k,\tilde g_k,f_k\times \id)=\sys_2'(M_k,g_{k,\tau},f_k)\geq \sys_2'(M_k,g_{k},f_k)\geq 4\pi.
$$
This leads to a contradiction to Theorem \ref{Thm: 2 systole n manifold}.
\end{proof}

If all manifolds in the sequence have a warped product structure that can be preserved to the limit, then we can show the following result.
\begin{proposition}\label{Prop: vanish normal ricci limit}
For $3\leq n\leq 7$, let $(\bar M_k,\bar g_k)$ be a sequence of orientable connected closed Riemannian $n$-manifolds. Assume that
\begin{itemize}
\item $\bar M_k= M_k\times \mathbf S^1$ and $\bar g_k= g_k+ u_k^2\mathrm ds^2$, where $g_k$ is a smooth metric on $ M_k$, $ u_k:M_k\to \mathbf R$ is a positive function and $s$ is the arc length of the unit circle $\mathbf S^1$;
\item there is a smooth map $f_k: M_k\to \mathbf S^2\times T^{n-3}$ with nonzero degree and $\sys'_2( M_k, g_k,f_k)\geq 4\pi$ for all $k$;
\item With some point $p_k\in M_k$ the pointed sequence $( M_k, g_k, p_k)$ converges to a pointed complete Riemannian manifold $( M, g, p)$ in the sense of pointed smooth topology and also $ u_k$ converges smoothly to a positive function $ u: M\to \mathbf R$ on each compact subset;
\item $R(\bar g_k)\geq \delta_k\to 2$ as $k\to\infty$ and there is a universal constant $r_0$ such that the scalar curvature $R(\bar g_k)\geq 2$ outside $B_{\bar g_k}(\bar p_k,r_0)$ with $\bar p_k=( p_k,\theta)$ for a fixed $\theta\in\mathbf S^1$.
\end{itemize}
Then $(\bar M,\bar g)$ has constant scalar curvature $2$ and nonnegative Ricci curvature, where $\bar M= M\times\mathbf S^1$ and $\bar g= g+ u^2\mathrm ds^2$. Moreover, it holds $$\Ric_{\bar g}(\partial_s,\partial_s)\equiv 0.$$
\end{proposition}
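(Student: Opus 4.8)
The plan is to read off the first two assertions from Proposition~\ref{Prop: scalar 2 limit} and then to establish $\Ric_{\bar g}(\partial_s,\partial_s)\equiv 0$ by contradiction, running the warped-product construction one more time. First I would check that $(\bar M_k,\bar g_k,\bar p_k)$ satisfies the hypotheses of Proposition~\ref{Prop: scalar 2 limit}: the manifold $\bar M_k=M_k\times\mathbf S^1$ carries the nonzero-degree map $f_k\times\id\colon\bar M_k\to\mathbf S^2\times T^{n-3}\times\mathbf S^1=\mathbf S^2\times T^{n-2}$; since the warped product metric $g_k+u_k^2\,\mathrm ds^2$ does not change the partial spherical $2$-systole, $\sys'_2(\bar M_k,\bar g_k,f_k\times\id)=\sys'_2(M_k,g_k,f_k)\geq 4\pi$; the pointed smooth convergence $(M_k,g_k,p_k)\to(M,g,p)$ together with $u_k\to u$ on compact sets gives $(\bar M_k,\bar g_k,\bar p_k)\to(\bar M,\bar g,\bar p)$ in the pointed smooth topology; and $R(\bar g_k)\geq\delta_k\to 2$ with $R(\bar g_k)\geq 2$ outside $B_{\bar g_k}(\bar p_k,r_0)$ are assumed. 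Hence $(\bar M,\bar g)$ has constant scalar curvature $2$ and nonnegative Ricci curvature.

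For the warped product $\bar g=g+u^2\,\mathrm ds^2$ one has $\Ric_{\bar g}(\partial_s,\partial_s)=-u\,\Delta_g u$ and $R(\bar g)=R(g)-2u^{-1}\Delta_g u$. From $\Ric_{\bar g}\geq 0$ and $u>0$ we get $\Delta_g u\leq 0$, and from $R(\bar g)\equiv 2$ we get $\Delta_g u=\tfrac12(R(g)-2)u$; so $R(g)\leq 2$ everywhere and, setting $L_g:=-\Delta_g+\tfrac12(R(g)-2)$, the positive function $u$ solves $L_g u=0$ on $M$. Thus it remains to show $R(g)\equiv 2$, so assume to the contrary that $R(g)(q)<2$ at some $q\in M$.

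The idea, as in the proof of Proposition~\ref{Prop: scalar 2 limit}, is that this strict slack would let us build, for large $k$, a warped product over $(M_k,g_k)$ with scalar curvature strictly above $2$ and partial spherical $2$-systole still $\geq 4\pi$, contradicting Theorem~\ref{Thm: 2 systole n manifold}. Concretely, I would deform $g$ near $q$ — as in the Ricci-deformation step of Proposition~\ref{Prop: scalar 2 limit}, arranging the deformation not to decrease $g$ — to a metric $g_\tau$ that agrees with $g$ outside a compact subset of $B_g(p,r)$ for some $r>r_0$ and has $\mu_1^N(B_g(p,r),L_{g_\tau})>0$. Pulling this back by the approximating diffeomorphisms $\Phi_k$ gives $g_{k,\tau}\geq g_k$, equal to $g_k$ outside $B_{g_k}(p_k,r)$, with $\mu_1^N(B_{g_k}(p_k,r),L_{g_{k,\tau}})>0$ for $k$ large. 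On the other hand $L_{g_k}u_k=\tfrac12(R(\bar g_k)-2)u_k$ together with $R(\bar g_k)\geq 2$ outside $B_{\bar g_k}(\bar p_k,r_0)$ (and the fact that $R(\bar g_k)$ only depends on the $M_k$-coordinate) shows $u_k$ is a positive supersolution of $L_{g_k}=L_{g_{k,\tau}}$ on $M_k\setminus B_{g_k}(p_k,r_0)$; combining this with the positivity on $B_{g_k}(p_k,r)$ (here $r>r_0$) one concludes, just as in Proposition~\ref{Prop: scalar 2 limit}, that the first eigenvalue $\lambda_{1,k}$ of $L_{g_{k,\tau}}$ on $M_k$ is positive. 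Letting $v_k>0$ be the corresponding eigenfunction and setting $\tilde M_k=M_k\times\mathbf S^1$, $\tilde g_k=g_{k,\tau}+v_k^2\,\mathrm d\sigma^2$, one gets constant scalar curvature $R(\tilde g_k)=2+2\lambda_{1,k}>2$, while $\sys'_2(\tilde M_k,\tilde g_k,f_k\times\id)=\sys'_2(M_k,g_{k,\tau},f_k)\geq\sys'_2(M_k,g_k,f_k)\geq 4\pi$ since $g_{k,\tau}\geq g_k$; as $\tilde M_k$ is an orientable closed $n$-manifold with a nonzero-degree map to $\mathbf S^2\times T^{n-2}$, this contradicts Theorem~\ref{Thm: 2 systole n manifold}. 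Therefore $R(g)\equiv 2$, hence $\Ric_{\bar g}(\partial_s,\partial_s)=-u\,\Delta_g u\equiv 0$.

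The step I expect to be the main obstacle is producing the deformation $g_\tau\geq g$ with $\mu_1^N(B_g(p,r),L_{g_\tau})>0$: since $\Ric_{\bar g}\geq 0$ constrains $\Ric_g$ and the potential $\tfrac12(R(g)-2)$ of $L_g$ is everywhere nonpositive, the plain Ricci deformation used in Proposition~\ref{Prop: scalar 2 limit} is not directly available, and one has to exploit the fixed pointwise gap $R(g)(q)<2$ together with the warped structure to gain operator-positivity on a ball without shrinking the metric; once that is in hand, the localization-of-positivity argument and the warped-product bookkeeping are the same as in Proposition~\ref{Prop: scalar 2 limit}.
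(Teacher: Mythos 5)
Your handling of the first two conclusions (apply Proposition~\ref{Prop: scalar 2 limit} to the warped sequence, using that the warp factor does not affect the partial spherical $2$-systole) matches the paper, and your preliminary reductions are correct and tidy: from the warped-product formulas $\Ric_{\bar g}(\partial_s,\partial_s)=-u\,\Delta_g u$ and $R(\bar g)=R(g)-2u^{-1}\Delta_g u$, together with $\Ric_{\bar g}\geq 0$ and $R(\bar g)\equiv 2$, one gets $\Delta_g u\leq 0$, $R(g)\leq 2$, $L_g u=0$, and the equivalence $\Ric_{\bar g}(\partial_s,\partial_s)\equiv 0\iff R(g)\equiv 2$. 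Your ``localization of positivity'' via the supersolution $u_k$ of $L_{g_k}$ outside the ball is also a valid variant of what the paper does.

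However, there is a genuine gap, and it is exactly the one you flagged: you need to deform $g$ near a point $q$ with $R(g)(q)<2$ to get $\mu_1^N(B_g(p,r),L_{g_\tau})>0$ \emph{without decreasing} $g$, and there is no mechanism for that. The potential $\tfrac12(R(g)-2)$ of $L_g$ is nonpositive, and after Proposition~\ref{Prop: scalar 2 limit} there is no sign information on $\Ric_g$ that would let you run the Ricci-direction deformation from that proof in a direction compatible with $g_\tau\geq g$. So the contradiction never gets started.

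The missing idea is precisely the paper's ``key observation'': instead of enlarging the base metric $g$, \emph{shrink the warp factor} $u$. Since the first part already gives $\Ric_{\bar g}\geq 0$, if $\Ric_{\bar g}(\partial_s,\partial_s)$ is nonzero at some $\bar q$, it is positive there, so $u_t:=u-2t\,\eta\,\Ric_{\bar g}(\partial_s,\partial_s)$ is a legitimate (decreasing) perturbation of the lapse for small $t>0$. Viewed on $\bar M$, this is a Ricci-direction deformation of $\bar g$ in the $\omega_s\otimes\omega_s$-direction (block-diagonality of $\Ric_{\bar g}$, which the paper gets from totally geodesic slices and Codazzi, ensures $\langle h,\Ric_{\bar g}\rangle=\Ric_{\bar g}(E_s,E_s)^2>0$ near $\bar q$), so the eigenvalue derivative computation of Proposition~\ref{Prop: scalar 2 limit} applies and gives $\mu_1>0$ for some $\tau>0$. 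Crucially, this deformation changes only the warp factor, so $\sys'_2(\bar M_k,\bar g_{k,\tau},f_k\times\id)=\sys'_2(M_k,g_k,f_k)\geq 4\pi$ is automatic, with no need for any monotonicity of metrics. Your constraint ``$g_\tau\geq g$'' is the wrong one to impose: it should be ``the $M$-component of $\bar g_\tau$ is $\geq g$'' (here it is equal), while the $\partial_s$-component is free to decrease. Once you switch to this deformation, the rest of your bookkeeping (pass to the eigenfunction $v_k$, form $\tilde M_k=\bar M_k\times\mathbf S^1$ with $\tilde g_k=\bar g_{k,\tau}+v_k^2\,\mathrm ds^2$, get $R(\tilde g_k)>2$ and $\sys'_2\geq 4\pi$, contradict Theorem~\ref{Thm: 2 systole n manifold}) goes through as you describe.
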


The key observation for above proposition is that the deformation along normal Ricci direction only makes a change on lapse function and this causes no effect on the partial spherical $2$-systole.
\begin{proof}
From our assumption the manifolds $(\bar M_k,\bar g_k,\bar p_k)$ converge to $(\bar M,\bar g,\bar p)$ in the sense of pointed smooth topology and so the first part of the conclusion comes directly from Proposition \ref{Prop: scalar 2 limit}.
In the following we prove $\Ric_{\bar g}(\partial_s,\partial_s)\equiv 0$. If this is not the case, then we can pick up a point $\bar q=( q,\theta)$ such that $\Ric_{\bar g}(\partial_s,\partial_s)$ is nonzero at $\bar q$ due to the $\mathbf S^1$-invariance of $(\bar M,\bar g)$. Choose a positive constant $r>r_0$ such that $p$ is contained in $B_{ g}( p,r)$ and take a nonnegative cut-off function $\eta$ on $ M$ taking value $1$ around $ q$ and vanishing outside $B_{ g}( p,r)$. For small $t$ we can define
$$
\bar g_t=g+u_t^2\mathrm ds^2\quad\text{with}\quad u_t:= u-2t\eta\Ric_{\bar g}(\partial_s,\partial_s).
$$
Notice that $u_t$ is actually a positive function on $M$ due to the $\mathbf S^1$-invariance of $(\bar M,g)$.  It is clear that all hypersurfaces $M\times\{\theta'\}$ with $\theta'\in \mathbf S^1$ are totally geodesic in $(\bar M,\bar g)$ since they can be realized as the fixed set of a reflection. Therefore the mixed Ricci curvature $\Ric(\partial_i,\partial_s)$ vanishes from the Codazzi equation.
Repeating the calculation in \eqref{Eq: derivative newmann eigenvalue}, we conclude that there is a small positive constant $\tau$ such that the first Neumann eigenvalue $\mu_1(B_{\hat g}(\hat p,R)\times \mathbf S^1,L_{\bar g_{\tau}})$ is positive. Take $\Omega\supset B_{g}(p,r)$ and denote $\Phi_k:\Omega\to \Omega_k\subset M_k$ to be the diffeomorphisms from Definition \ref{Defn: smooth convergence}. Let us define
$$
\bar g_{k,\tau}= g_k+\left( u_k-2\tau(\eta\circ\Phi_k^{-1})\Ric_{\bar g_k}(\partial_s,\partial_s)\right)\mathrm ds^2.
$$
From the pointed smooth convergence the first Neumann eigenvalue $$\mu_1(B_{ g_k}( p_k,r)\times \mathbf S^1,L_{\bar g_{k,\tau}})$$
is positive for sufficiently large $k$. As before, we can define a new manifold $(\tilde M_k,\tilde g_k)$ with $\tilde M_k=\bar M_k\times \mathbf S^1$ and $\tilde g_k=\bar g_{k,\tau}+v_k^2\mathrm ds^2$ such that $R(\tilde g_k)>2$. Notice that
\begin{equation*}
\begin{split}
\sys_2'(\tilde M_k,\tilde g_k,f_k\times \id\times \id)&=\sys_2'(\bar M_k,\bar g_{k,\tau},f_k\times \id)\\
&=\sys_2'(M_k,g_k,f_k)\geq 4\pi,
\end{split}
\end{equation*}
we obtain a contradiction to Theorem \ref{Thm: 2 systole n manifold}.
\end{proof}

\section{Proof for main theorems}
In this section, we give a detailed proof for our main theorems.
\subsection{Proof for Theorem \ref{Thm: main1}}
In this subsection, $(M^n,g)$ always denotes a connected orientable complete open Riemannian manifold with nonnegative scalar curvature, which admits a smooth proper map
$$f:M\to T^{n-1}\times \mathbf R$$
with nonzero degree. We also denote
$$\pi_1:T^{n-1}\times \mathbf R\to T^{n-1}\quad\text{and}\quad\pi_2:T^{n-1}\times \mathbf R\to \mathbf R$$
to be the canonical projection maps. It follows from the Sard's theorem that there is a constant $r_0$ such that $\Sigma=(\pi_2\circ f)^{-1}(r_0)$ is a closed embedded separable hypersurface in $M$ with
\begin{equation}\label{Eq: Sigma0 nonzero degree}
(\pi_1\circ f)_*([\Sigma])=(\deg f)[T^{n-1}].
\end{equation}
Clearly, $\Sigma$ separates $M$ into two unbounded parts
$$M_+=\{\pi_2\circ f>r_0\}\quad \text{and}\quad M_-=\{\pi_2\circ f<r_0\}.$$
As a result, a surjective signed distance function to $\Sigma$ is well-defined.
Below we just take the same notation as in Section \ref{Sec: 2} to give a proof for Theorem \ref{Thm: main1}.
\begin{proof}[Proof for Theorem \ref{Thm: main1}]
From Proposition \ref{Prop: existence} we can take positive constants $\epsilon_k\to 0$ as $k\to \infty$ such that we can construct a smooth minimizer $\hat\Omega_k$ in $\mathcal C_{\epsilon_k}$ for functional $\mathcal A^{\epsilon_k}$. It follows from a direct comparison that $$\area(\partial\hat\Omega_k)\leq \area(\Sigma).$$
Since boundary $\partial\hat\Omega_k$ is homologous to the hypersurface $\Sigma$, combined with \eqref{Eq: Sigma0 nonzero degree} we can pick up some connected component $\hat\Sigma_k$ of $\partial\hat\Omega_k$ such that
\begin{equation}\label{Eq: hat sigma k nonzeo degree}
(\pi_1\circ f)_*([\hat\Sigma_k])=c[T^{n-1}],\quad c\neq 0.
\end{equation}
Notice that hypersurfaces $\hat\Sigma_k$ are $h_{\epsilon_k}$-minimizing boundaries with uniform area bound, so we can apply the curvature estimate in \cite[Theorem 3.6]{ZZ2018} to $\hat\Sigma_k$. After passing to a subsequence $\hat\Sigma_k$ converges to an area-minimizing boundary $\hat\Sigma$ smoothly in locally graphical sense with multiplicity one. From the stability of $\hat\Sigma_k$ we can find a nonnegative constant $\lambda_{1,k}$ and a positive function $\hat u_k:\hat\Sigma_k\to \mathbf R$ such that
\begin{equation}
-\hat\Delta_{k}\hat u_k-\left(\Ric(\nu_k,\nu_k)+|A_k|^2-\partial_{\nu_k} (h_{\epsilon_k}\circ\phi)\right)\hat u_k=\lambda_{1,k}\hat u_k,
\end{equation}
where $\hat\Delta_k$ is the Laplace-Beltrami operator of $\hat\Sigma_k$ with induced metric $\hat g_k$, $\nu_k$ is the outward unit normal vector field of $\hat\Sigma_k$ with respect to $\hat\Omega_k$ and $A_k$ is the corresponding second fundamental form. Let $s$ be the arc length of the unit circle $\mathbf S^1$, then we define $\bar\Sigma_k=\hat\Sigma_k\times \mathbf S^1$ and $\bar g_k=\hat g_k+\hat u_k^2\mathrm ds^2$. Direct calculation gives
\begin{equation}\label{Eq: scalar bar gk}
\begin{split}
R(\bar g_k)&=R(\hat g_k)-\frac{2\hat\Delta_k \hat u_k}{\hat u_k}\\
&\geq R(g)+|\mathring A_k|^2+\left(\frac{n}{n-1}h_{\epsilon_k}^2+2h'_{\epsilon_k}\right)\circ\phi+2 \lambda_{1,k}.
\end{split}
\end{equation}
From \eqref{Eq: hat sigma k nonzeo degree} we see that $\bar\Sigma_k$ admits a smooth map to $T^n$ with nonzero degree. Combined with the nonnegativity of $R(g)$ and Lemma \ref{Lem: function h epsilon}, it follows from \cite[Corollary 2]{SY1979} that $\hat\Sigma_k$ must have nonempty intersection with the fixed compact subset $K=\phi^{-1}([-\frac{1}{2n},\frac{1}{2n}])$ and $0\leq \lambda_{1,k}\leq C(n)\epsilon_k$. Pick up a point $\hat p_k\in\hat\Sigma_k\cap K$ for each $k$ and normalize $\hat u_k(\hat p_k)=1$. Up to subsequence the locally graphical convergence with multiplicity one implies that the pointed sequence $(\hat \Sigma_k,\hat p_k)$ converges to $(\hat\Sigma',\hat p)$ in the pointed smooth topology. Here $\hat\Sigma'$ is one connected component of $\hat\Sigma$ and $\hat p$ is a point on $\hat\Sigma'$.
From Harnack inequality $\hat u_k$ converges smoothly to a function $\hat u:\hat\Sigma'\to \mathbf R$ with $\hat u(\hat p)=1$ after passing to a further subsequence.

Similarly, we define $\bar\Sigma=\hat\Sigma'\times \mathbf S^1$ and $\bar g=\hat g+\hat u^2\mathrm dt^2$ with $\hat g$ to be the induced metric of $\hat\Sigma'$. From previous construction it is quick to see that $(\bar \Sigma_k,\bar g_k,\bar p_k)$ converges to $(\bar \Sigma,\bar g,\bar p)$ in the pointed smooth topology, where $\bar p_k=(\hat p_k,\theta)$ and $\bar p=(\hat p,\theta)$ for a fixed $\theta$ in $\mathbf S^1$. From \eqref{Eq: scalar bar gk} we conclude that the scalar curvature $R(\bar g_k)$ is nonnegative outside $(\hat\Sigma_k\cap K)\times \mathbf S^1$. Given the curvature estimate and the uniform area bound for $\hat\Sigma_k$, this yields the existence of a positive constant $R_0$ such that $R(\bar g_k)\geq 0$ outside $B_{\bar g_k}(\bar p_k,R_0)$. We point out that \eqref{Eq: scalar bar gk} also implies $R(\bar g_k)\geq -C\epsilon_k\to 0$ as $k\to\infty$. Now it follows from Proposition \ref{Prop: Ricci flat limit} that $(\bar\Sigma,\bar g)$ is Ricci-flat.

Through investigating the variation induced by the isometric $\mathbf S^1$-action of $(\bar\Sigma,\bar g)$ we can obtain $\hat\Delta\hat u=0$, where $\hat\Delta$ denotes the Laplace-Beltrami operator of $\hat\Sigma'$. We show that the function $\hat u$ has to be a positive constant.

Denote $\hat v=\log\hat u$, then we have
\begin{equation}\label{Eq: equation hat v}
\hat\Delta\hat v=-|\hat\nabla\hat v|^2.
\end{equation}
For any $r>0$ we can take a smooth cut-off function $\eta_r$ such that $0\leq \eta_r\leq 1$, $\eta_r\equiv 1$ in $B_{\hat g}(\hat p,r)$, $\eta_r\equiv 0$ outside $B_{\hat g}(\hat p,2r)$ and $|\hat\nabla\eta_r|\leq Cr^{-1}$. Multiplying \eqref{Eq: equation hat v} by $\eta_r^2$ and integrating by parts, we see
\begin{equation*}
\begin{split}
-\int_{\hat\Sigma'}\eta_r^2|\hat\nabla\hat v|^2\mathrm d\mu_{\hat g}&=-2\int_{\hat\Sigma'}\eta_r\hat\nabla\eta_r\cdot\hat\nabla\hat v\,\mathrm d\mu_{\hat g}\\
&\geq -2\int_{\hat\Sigma'}|\hat\nabla\eta_r|^2\mathrm d\mu_{\hat g}-\frac{1}{2}\int_{\hat\Sigma'}\eta_r^2|\hat\nabla\hat v|^2\mathrm d\mu_{\hat g}.
\end{split}
\end{equation*}
This implies
\begin{equation*}
\int_{B_{\hat g}(\hat p,r)}|\hat\nabla\hat v|^2\mathrm d\mu_{\hat g}\leq 4\int_{\hat\Sigma'}|\hat\nabla\eta_r|^2\mathrm d\mu_{\hat g}\leq 4C^2r^{-2}\area(\hat\Sigma').
\end{equation*}
Notice that $\area(\hat\Sigma')\leq \area(\Sigma)$, by letting $r\to+\infty$ we obtain that $\hat u$ is a constant. Therefore $\hat\Sigma'$ is a complete Ricci-flat Riemannian manifold with finite area. It is a well-known fact that an open complete Riemannian manifold with nonnegative Ricci curvature has infinite volume (see for instance \cite[Theorem 7]{Yau1976}). So $\hat\Sigma'$ has to be a closed hypersurface in $M$. From the connectedness of $\hat\Sigma_k$ and the local graphical convergence, we conclude $\hat\Sigma=\hat\Sigma'$ and that $\hat\Sigma_k$ is a graph over $\hat\Sigma$ when $k$ is sufficiently large. In particular, $\hat\Sigma$ admits a smooth map to $T^{n-1}$ with nonzero degree. Since $\hat\Sigma$ is an area-minimizing boundary in a Riemannian manifold with nonnegative scalar curvature, it must be a flat $(n-1)$-torus. Then the standard foliation argument as in \cite[Proposition 3.4]{Z2020} yields that $M$ splits as the Riemannian product $T^{n-1}\times \mathbf R$.
\end{proof}

\subsection{Proof for Theorem \ref{Thm: main2}}
In this subsection, we denote $(M^3,g)$ to be a complete connected open Riemannian manifold with positive scalar curvature and non-trivial second homotopy group. From the lifting theorem for covering spaces we only need to deal with the case when $M$ is simply connected and this is always assumed in the rest of this section.

Since $\pi_2(M)$ is non-trivial, it follows from the sphere theorem that there is an embedded sphere $\Sigma$ in $M$ representing a nontrivial element in $\pi_2(M)$. The Hurewicz theorem yields that the sphere $\Sigma$ is also homologically non-trivial. Combined with the simply-connectedness of $M$ it must separate $M$ into two unbounded components. Notice that there is a well-defined surjective signed distance function to $\Sigma$ and so the discussion in Section \ref{Sec: 2} is also valid here.

Before we prove our main theorem we present the following description for stable minimal surfaces in $3$-manifolds with uniformly positive scalar curvature, which plays a crucial role in our argument.
\begin{lemma}\label{Lem: stable min}
Let $(M^3,g)$ be an orientable complete manifold with uniformly positive scalar curvature. If $\Sigma$ is a complete two-sided stable immersed minimal surface in $M$, then it is a sphere.
\end{lemma}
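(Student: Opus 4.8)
The plan is to combine the second variation inequality for $\Sigma$ with the Gauss equation, producing on $\Sigma$ a Schr\"odinger operator with a positive spectral gap, and then to reduce the whole statement to the compactness of $\Sigma$, which I would extract from the Fischer--Colbrie--Schoen structure theory.

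Concretely: since $M$ is orientable and $\Sigma$ is two-sided, $\Sigma$ admits a global unit normal $\nu$ and is itself orientable. Write $A$ for the second fundamental form of the immersion, $K_\Sigma$ for the Gauss curvature of the induced metric, and put $2\kappa:=\inf_M R(g)>0$. Stability gives
\begin{equation*}
\int_\Sigma|\nabla_\Sigma\varphi|^2\,\mathrm d\mu_\Sigma\ \geq\ \int_\Sigma\bigl(\Ric_g(\nu,\nu)+|A|^2\bigr)\varphi^2\,\mathrm d\mu_\Sigma,\qquad \varphi\in C_c^\infty(\Sigma),
\end{equation*}
while the traced Gauss equation for the minimal surface $\Sigma^2\subset M^3$ reads $\Ric_g(\nu,\nu)+|A|^2=\tfrac{1}{2} R(g)+\tfrac{1}{2}|A|^2-K_\Sigma\geq\kappa-K_\Sigma$, so that
\begin{equation*}
\int_\Sigma\bigl(|\nabla_\Sigma\varphi|^2+K_\Sigma\,\varphi^2\bigr)\,\mathrm d\mu_\Sigma\ \geq\ \kappa\int_\Sigma\varphi^2\,\mathrm d\mu_\Sigma\qquad\text{for all }\varphi\in C_c^\infty(\Sigma);
\end{equation*}
equivalently, $-\Delta_\Sigma+K_\Sigma$ has bottom of $L^2$-spectrum at least $\kappa>0$. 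Granting for the moment that $\Sigma$ is compact, one may take $\varphi\equiv 1$ here to get $\int_\Sigma K_\Sigma\,\mathrm d\mu_\Sigma\geq\kappa\,\area(\Sigma)>0$, whence $2\pi\chi(\Sigma)>0$ by Gauss--Bonnet and $\chi(\Sigma)=2$ because $\Sigma$ is an orientable closed surface; thus $\Sigma$ is a sphere. So the entire statement reduces to showing that $\Sigma$ is compact.

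This last step is the real obstacle, since a priori $\Sigma$ could lose topology at infinity, and it is exactly here that the uniform positivity of $R(g)$ enters. I would argue via Fischer--Colbrie--Schoen: the spectral gap above yields a positive function $u$ on $\Sigma$ with $-\Delta_\Sigma u+K_\Sigma u=\kappa u$, and the conformal metric $\hat g=u^2 g_\Sigma$ is then complete with strictly positive Gauss curvature $\hat K=u^{-2}\bigl(\kappa+|\nabla_\Sigma\log u|^2\bigr)$. A complete surface of positive curvature is, by Huber's theorem, conformally either $\mathbf S^2$ or $\mathbf C$; in the parabolic case $\mathbf C$ the accompanying Fischer--Colbrie--Schoen rigidity forces $\Sigma$ to be totally geodesic and flat with $\Ric_g(\nu,\nu)\equiv 0$, hence $R(g)\equiv 0$ along $\Sigma$, contradicting $R(g)\geq 2\kappa>0$. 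Therefore $\Sigma$ is conformally $\mathbf S^2$, in particular closed, which completes the reduction. The remaining ingredients — the precise Gauss equation, the conformal transformation law for $K$, and the verification of the Fischer--Colbrie--Schoen hypotheses — are routine; the compactness, i.e. ruling out a loss of topology at infinity, is the only delicate point.
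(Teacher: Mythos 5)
Your overall strategy is genuinely different from the paper's. The paper quotes \cite[Theorem~3]{FS1980} together with \cite[Proposition~C.1]{CCE2016} to conclude outright that $\Sigma$ is a sphere or a plane, and then excludes the plane by a warped-product trick: stability produces a positive Jacobi-type function $u$ on $\Sigma$, and the $3$-manifold $(\Sigma\times\mathbf S^1,\ g_\Sigma+u^2\,\mathrm ds^2)$ is then complete, diffeomorphic to $\mathbf R^2\times\mathbf S^1$, with scalar curvature $R(g)+|A|^2\geq\inf_M R(g)>0$, contradicting Gromov's quadratic decay theorem from \cite{Gromov2018}. You instead try to establish the conformal type of $\Sigma$ directly by passing to the conformal metric $\hat g=u^2 g_\Sigma$ and invoking Cohn--Vossen/Huber, then dispose of the parabolic case by Fischer--Colbrie--Schoen rigidity. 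Your reduction of the compact case to Gauss--Bonnet, and your computation of $\hat K$, are both correct.

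The argument has a real gap, however: the completeness of $\hat g=u^2 g_\Sigma$ is asserted but not justified, and it is not automatic. The positive solution $u$ of $-\Delta_\Sigma u+K_\Sigma u=\kappa u$ obtained from the Fischer--Colbrie--Schoen positivity criterion is only known to be positive; it may tend to $0$ along a divergent sequence, in which case the conformal metric $u^2g_\Sigma$ can fail to be complete even though $g_\Sigma$ is (compare $u=e^{-|x|^2}$ on flat $\mathbf R^2$). Without completeness, neither the Cohn--Vossen inequality nor Huber's theorem applies to $(\Sigma,\hat g)$, and your classification ``conformally $\mathbf S^2$ or $\mathbf C$'' does not follow. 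Note that this is exactly the step the conformal metric was introduced to supply, so it cannot be skipped. If instead you wish to quote the conformal-type classification itself from Fischer--Colbrie--Schoen (as the paper does, supplemented by \cite{CCE2016} to rule out the cylinder/torus cases in the immersed, no-area-growth setting), then the conformal-metric detour is unnecessary, and your argument essentially collapses onto the same inputs the paper uses, only replacing the warped product/quadratic-decay step by the Fischer--Colbrie--Schoen rigidity (totally geodesic, flat, $\Ric_g(\nu,\nu)\equiv 0$) for the parabolic case — which is indeed a valid alternative ending, and arguably more classical, but it is not what you have written. As stated, the completeness of $\hat g$ is the missing piece.
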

\begin{proof}
It follows from \cite[Theorem 3]{FS1980} and \cite[Proposition C.1]{CCE2016} that $\Sigma$ must be a sphere or a plane. We show that the latter case cannot happen. Otherwise from stability there is a positive function $u$ such that
$$
-\Delta_{\Sigma}u-\frac{1}{2}(R(g)-R(g_\Sigma)+|A|^2)u=0,
$$
where $g_\Sigma$ is the induced metric of $\Sigma$, $\Delta_\Sigma$ denotes the corresponding Laplace-Beltrami operator and $A$ is the second fundamental form of $\Sigma$. Define the metric $\tilde g=g_\Sigma+u^2\mathrm ds^2$ on $\tilde M=\Sigma\times \mathbf S^1$. Clearly, $(\tilde M,\tilde g)$ is complete with uniformly positive scalar curvature since we have
$$
R(\tilde g)=R_\Sigma-{2u^{-1}\Delta_\Sigma u}=R(g)+|A|^2.
$$
This then contradicts to the quadratic decay theorem in \cite{Gromov2018}.
\end{proof}

Now we are ready to prove Theorem \ref{Thm: main2}.
\begin{proof}[Proof for Theorem \ref{Thm: main2}]
We only need to prove when $\inf R(g)>0$. Take a sequence of positive constants $\epsilon_k$ such that $\epsilon_k\to 0$ as $k\to\infty$ such that we can construct a smooth minimizer $\hat\Omega_k$ in $\mathcal C_{\epsilon_k}$ for functional $\mathcal A^{\epsilon_k}$. Since $\partial\hat\Omega_k$ is homologous to $\Sigma$, we can choose one of its components to be homologically nontrivial, denoted by $\hat\Sigma_k$. It follows from the stability and the Gauss equation that
\begin{equation*}
\begin{split}
\int_{\hat\Sigma_k}|\nabla\psi|^2\,\mathrm d\sigma_g
&\geq \int_{\hat\Sigma_k}\left(\Ric(\nu,\nu)+|A|^2-\partial_\nu(h_{\epsilon_k}\circ\phi) \right)\psi^2\,\mathrm d\sigma_g\\
&\geq\frac{1}{2}\int_{\hat\Sigma_k}\left(R(g)-R_{\hat\Sigma_k}+|\mathring A|^2+\left(\frac{3}{2}h_{\epsilon_k}^2+2h_{\epsilon_k}'\right)\circ\phi \right)\psi^2\,\mathrm d\sigma_g.
\end{split}
\end{equation*}
By choosing $\psi\equiv 1$ we conclude
$$
8\pi\geq 4\pi\chi(\hat\Sigma_k)=\int_{\hat\Sigma_k}R_{\hat\Sigma_k}\,\mathrm d\sigma_g\geq \int_{\hat\Sigma_k}R(g)-C\epsilon_k.
$$
For $k$ large enough, surface $\hat\Sigma_k$ is a sphere with
$$\area(\hat\Sigma_k)\leq 8\pi(\inf R(g)-C\epsilon_k)^{-1}.$$
Clearly $\hat\Sigma_k$ represents a nontrivial homotopy class and by letting $k\to\infty$ we obtain
$$\sys_2(M,g)\leq 8\pi(\inf R(g))^{-1}.$$

In the equality case, we can always assume $\inf R(g)=2$ through scaling. If we have $\sys_2(M,g)=4\pi$, then surfaces $\hat\Sigma_k$ must have non-empty intersection with the compact subset $K=\phi^{-1}([-\frac{1}{6},\frac{1}{6}])$. From comparison we have $\area(\hat\Sigma_k)\leq \area(\Sigma)$. Combined with the curvature estimate, up to subsequence $\hat\Sigma_k$ converges smoothly to an area-minimizing boundary $\hat\Sigma$ in locally graphical sense. From Lemma \ref{Lem: stable min} we conclude that $\hat\Sigma$ consists of spherical components. Since $\hat\Sigma_k$ is connected, $\hat\Sigma$ is a sphere and $\hat\Sigma_k$ becomes a graph over $\hat\Sigma$ for $k$ large enough. Therefore $\hat\Sigma$ also represents a nontrivial homotopy class. Then the second variation formula yields
$$
4\pi=\sys_2(M,g)\leq \area(\hat\Sigma)\leq 4\pi.
$$
Therefore $\hat\Sigma$ is area-minimizing in its homotopy class. The rest of the proof is the same as in \cite{BBN2010} and we omit the details.
\end{proof}

\subsection{Proof for Theorem \ref{Thm: main3}}
In this subsection, $(M^n,g)$ always denotes a connected orientable complete open Riemannian manifold with positive scalar curvature, which admits a smooth proper map
$$f:M\to \mathbf S^2\times T^{n-3}\times \mathbf R$$
with nonzero degree. We denote
$$
\pi_1:\mathbf S^2\times T^{n-3}\times \mathbf R\to \mathbf S^2\times T^{n-3}
$$
and
$$
\pi_2: \mathbf S^2\times T^{n-3}\times \mathbf R\to \mathbf R
$$
to be the canonical projection maps. Take $r_0$ such that $\Sigma=(\pi_2\circ f)^{-1}(r_0)$ is a closed embedded separable hypersurface in $M$ and we adopt the same notation as in Section \ref{Sec: 2}. From the definition of $\Sigma$ we have
\begin{equation}
(\pi_1\circ f)_*([\Sigma])=(\deg f)[\mathbf S^2\times T^{n-3}].
\end{equation}

Now we give a proof for Theorem \ref{Thm: main3}. The proof is almost identical to that of Theorem \ref{Thm: main1} while the key difference is that we use Proposition \ref{Prop: vanish normal ricci limit} instead of Proposition \ref{Prop: Ricci flat limit} to obtain the compactness of the limit.

\begin{proof}[Proof for Theorem \ref{Thm: main3}]
As in the proof for Theorem \ref{Thm: main2} we only need to deal with the case $\inf R(g)>0$. Take positive constants $\epsilon_k\to 0$ as $k\to\infty$ such that we can construct a smooth minimizer $\hat\Omega_k$ in $\mathcal C_{\epsilon_k}$ for functional $\mathcal A^{\epsilon_k}$. It follows from direct comparison that $\area(\partial\hat\Omega_k)\leq \area(\Sigma)$. Since $\partial\hat\Omega_k$ is homologous to $\Sigma$, we can choose one component $\hat\Sigma_k$ of $\partial\hat\Omega_k$ such that
\begin{equation}\label{Eq: hat sigma k nonzeo degree 2}
(f_1)_*([\hat\Sigma_k])=c[\mathbf S^2\times T^{n-3}],\quad c\neq 0,
\end{equation}
where $f_1:=\pi_1\circ f$. From the stability of $\hat\Sigma_k$ there exist a nonnegative constant $\lambda_{1,k}$ and a positive function $\hat u_k:\hat\Sigma_k\to \mathbf R$ such that
\begin{equation}
-\hat\Delta_{k}\hat u_k-\left(\Ric(\nu_k,\nu_k)+|A_k|^2-\partial_{\nu_k} (h_{\epsilon_k}\circ\phi)\right)\hat u_k=\lambda_{1,k}\hat u_k,
\end{equation}
where $\hat\Delta_k$ is the Laplace-Beltrami operator of $\hat\Sigma_k$ with induced metric $\hat g_k$, $\nu_k$ is the outward unit normal vector field of $\hat\Sigma_k$ with respect to $\hat\Omega_k$ and $A_k$ is the corresponding second fundamental form. With $s$ to be the arc length of the unit circle $\mathbf S^1$, we define $\bar\Sigma_k=\hat\Sigma_k\times \mathbf S^1$ and $\bar g_k=\hat g_k+\hat u_k^2\mathrm ds^2$. Direct calculation gives
\begin{equation}\label{Eq: scalar bar gk 2}
\begin{split}
R(\bar g_k)&=R(\hat g_k)-\frac{2\hat\Delta_k \hat u_k}{\hat u_k}\\
&\geq R(g)+|\mathring A_k|^2+\left(\frac{n}{n-1}h_{\epsilon_k}^2+2h'_{\epsilon_k}\right)\circ\phi+2 \lambda_{1,k}.
\end{split}
\end{equation}
From \eqref{Eq: hat sigma k nonzeo degree 2} we know that $\bar\Sigma_k$ admits a smooth map to $\mathbf S^2\times T^n$ with nonzero degree. Combining \eqref{Eq: scalar bar gk 2}, Lemma \ref{Lem: function h epsilon} and Theorem \ref{Thm: 2 systole n manifold} we conclude
$$
\sys_2'(\hat \Sigma_k,\hat g_k,f_1|_{\hat\Sigma_k})=\sys_2'(\bar \Sigma_k,\bar g_k,f_1|_{\hat\Sigma_k}\times\id)\leq 8\pi\left(\inf R(g)-C\epsilon_k\right)^{-1}.
$$
Letting $k\to \infty$ and using the fact
$$\sys_2(M,g)\leq \sys_2'(\hat \Sigma_k,\hat g_k,f_1|_{\hat\Sigma_k}),$$
we obtain
$$
\inf R(g)\cdot\sys_{2}(M,g)\leq 8\pi.
$$

In the following we deal with the equality case. Without loss of generality we assume $\inf R(g)=2$ and $\sys_2(M,g)=4\pi$. It then follows
$$
\sys_2'(\hat\Sigma_k,\hat g_k,f_1|_{\hat\Sigma_k})\geq \sys_2(M,g)=4\pi.
$$
From \eqref{Eq: scalar bar gk 2}, Lemma \ref{Lem: function h epsilon} and Theorem \ref{Thm: 2 systole n manifold}, each hypersurface $\hat\Sigma_k$ must have nonempty intersection with the compact subset $K=\phi^{-1}([-\frac{1}{2n},\frac{1}{2n}])$. With exactly the same argument as in the proof of Theorem \ref{Thm: main1}, up to subsequence we can obtain the following:
\begin{itemize}
\item With $\hat p_k\in \hat\Sigma_k\cap K$ the pointed hypersurface $(\hat\Sigma_k,\hat g_k,\hat p_k)$ converges smoothly to an area-minimizing boundary $(\hat\Sigma,\hat g,\hat p)$ in locally graphical sense with multiplicity one;
\item Normalized $\hat u_k(\hat p_k)=1$ the function $\hat u_k$ converges smoothly to a positive function $\hat u:\hat\Sigma\to \mathbf R $ on each compact subset;
\item $R(\bar g_k)\geq \delta_k\to 2$ and there is a universal constant $R_0$ such that the scalar curvature $R(\bar g_k)\geq 2$ outside the geodesic $R_0$-ball $B_{\bar g_k}(\bar p_k,R_0)$ with $\bar p_k=(\hat p_k,\theta)$ for a fixed $\theta\in\mathbf S^1$.
\end{itemize}
Denote $\bar\Sigma=\hat\Sigma\times \mathbf S^1$ and $\bar g=\hat g+\hat u^2\mathrm ds^2$. Then Proposition \ref{Prop: vanish normal ricci limit} tells us that $(\bar\Sigma,\bar g)$ has nonnegative Ricci curvature and $\Ric_{\bar g}(\partial_s,\partial_s)\equiv 0$. Through investigating the variation induced by the isometric $\mathbf S^1$-action of $(\bar\Sigma,\bar g)$ we obtain $\hat\Delta\hat u=0$, where $\hat\Delta$ denotes the Laplace-Beltrami operator of $\hat\Sigma$. The same argument as in the proof for Theorem \ref{Thm: main1} implies that $\hat u$ is a positive constant. As a result, $(\hat\Sigma,\hat g)$ has nonnegative Ricci curvature and so the finite area of $\hat\Sigma$ yields the compactness of $\hat\Sigma$. From the connectedness of $\hat\Sigma_k$ and the local graphical convergence, we conclude that $\hat\Sigma_k$ is a graph over $\hat\Sigma$ when $k$ is sufficiently large. Hence $\hat\Sigma$ admits a smooth map to $\mathbf S^2\times T^{n-3}$ with nonzero degree and $\sys_2'(\hat\Sigma,\hat g,f_1|_{\hat\Sigma})=4\pi$. Since $\hat\Sigma$ is an area-minimizing boundary in $(M,g)$ and $R(g)\geq 2$, the standard foliation argument as in \cite[Proposition 3.4]{Z2020} yields that $M$ is isometrically covered by the Riemannian product $\mathbf S^2\times \mathbf R^{n-2}$.
\end{proof}
\begin{remark}
We point out that there is an alternative way to prove the rigidity part of Theorem \ref{Thm: main3} with Cheeger-Gromoll splitting theorem. In fact, one can show that $(M,g)$ has nonnegative Ricci curvature based on the idea in Proposition \ref{Prop: scalar 2 limit} and the use of $h_\epsilon$-minimizing boundaries.
\end{remark}



\bibliography{bib}
\bibliographystyle{amsplain}
\end{document}